\def\NN{{\mathbb{N}}}
\def\ZZ{{\mathbb{Z}}}
\def\RR{{\mathbb{R}}}
\def\min{{\mathrm{min}}}
\def\centre{{\mathrm{centre}}}
\def\arm{{\mathrm{arm}}}
\def\shape{{\mathrm{shape}}}
\def\mult{{\mathrm{mult}}}
  \CheckCommand*\refstepcounter[1]{\stepcounter{#1}%
      \protected@edef\@currentlabel
       {\csname p@#1\endcsname\csname the#1\endcsname}%
  }
  \renewcommand*\refstepcounter[1]{\stepcounter{#1}%
    \protected@edef\@currentlabel
      {\csname p@#1\expandafter\endcsname\csname the#1\endcsname}%
  }
  \def\labelformat#1{\expandafter\def\csname p@#1\endcsname##1}
  \DeclareRobustCommand\Ref[1]{\protected@edef\@tempa{\ref{#1}}%
     \expandafter\MakeUppercase\@tempa
  }
  \newcommand{\numberlike}[2]{%
     \expandafter\def\csname c@#1\endcsname{%
         \expandafter\csname c@#2\endcsname}%
  }
  \def\DefaultNumberTheoremWithin{section}
  \theoremstyle{plain}
  \newtheorem{Lemma}{Lemma}
     \numberwithin{Lemma}{\DefaultNumberTheoremWithin}
     \numberwithin{Claim}{\DefaultNumberTheoremWithin}
  \newtheorem{Theorem}{Theorem}
     \numberwithin{Theorem}{\DefaultNumberTheoremWithin}
  \newtheorem{Corollary}{Corollary}
     \numberwithin{Corollary}{\DefaultNumberTheoremWithin}
  \newtheorem{Proposition}{Proposition}
     \numberwithin{Proposition}{\DefaultNumberTheoremWithin}
  \newtheorem{Conjecture}{Conjecture}
     \numberwithin{Conjecture}{\DefaultNumberTheoremWithin}
  \theoremstyle{definition}
     \numberwithin{Definition}{\DefaultNumberTheoremWithin}
  \theoremstyle{definition}
  \newtheorem{Question}{Question}
     \numberwithin{Question}{\DefaultNumberTheoremWithin}
  \theoremstyle{definition}
     \numberwithin{Problem}{\DefaultNumberTheoremWithin}
  \theoremstyle{remark}
     \numberwithin{Remark}{\DefaultNumberTheoremWithin}
  \theoremstyle{remark}
     \numberwithin{Example}{\DefaultNumberTheoremWithin}
     \numberwithin{Case}{Lemma}
     \numberwithin{Step}{Lemma}
     \theoremstyle{table}
     \numberwithin{Table}{\DefaultNumberTheoremWithin}
  \theoremstyle{table}
  \def\eqref{\ref}
\begin{document}


 \title[Products of stabilizing representations]{Products of stabilizing representations}


  \author{Artur Rapp}
     \address{Fachbereich Mathematik und Informatik\\
              Philipps-Universit\"at Marburg\\
              35032 Marburg\\
              Germany}
     \email{rapp202@mathematik.uni-marburg.de}

  \thanks{}

  \begin{abstract}
We study representation stability in the sense of Church and Farb. We show that products of stabilizing $S_n$-representations fulfill certain recursive relations which can be described by a new class of difference operators.
  \end{abstract}

  \maketitle

\section{Introduction}
We study representations of the symmetric group $S_n$ and formulate our statements about $S_n$-representations in the world of symmetric functions. We refer to Macdonald \cite{M} for background on $S_n$-representations and symmetric functions. The $\ZZ$-module of symmetric functions corresponding to (virtual) $S_n$-representations is denoted by $\Lambda_n$. A number partition $\lambda$ of $n\in \NN$ is a finite weakly decreasing sequence $\lambda=(\lambda_1,...,\lambda_l)$ of positive integers such that $\lambda_1+...+\lambda_l=n$. We also write $\lambda\vdash n$ or $|\lambda|=n$ in that case. The number $l=l(\lambda)$ is the length of $\lambda$. The set of Schur functions $\{s_{\lambda}~|~\lambda\vdash n\}$ is a basis of $\Lambda_n$. We define the componentwise sum of two  partitions $\lambda=(\lambda_1,...,\lambda_l)$ and $\mu=(\mu_1,...,\mu_k)$ with $l\le k$ by
 $\lambda+\mu=(\lambda_1+\mu_1,...,\lambda_l+\mu_l,\mu_{l+1},...,\mu_{k})$. 
 For a fixed partition $\lambda$ we denote by $s_{\mu}+\lambda$ the function $s_{\mu+\lambda}$ and extend this definition from the basis of Schur functions linearly to all symmetric functions. By $\Lambda_{\NN}^{n_0,k}$ we denote the $\ZZ$-module of sequences $\{f_n\}_{n\in\NN}$ with $f_n\in\Lambda_{nk+n_0}$ for all $n\in \NN$. For every $n_0\in\NN$, every partition $\lambda$ and every divisor $m$ of $|\lambda|$ we define
 $$ \Delta_m^{\lambda}:\Lambda_{\NN}^{n_0,|\lambda|/m}\rightarrow \Lambda_{\NN}^{n_0,|\lambda|/m},~
  \{f_n\}_{n\ge 0}\mapsto \{\Delta_m^{\lambda}f_{n+m}\}_{n\ge 0} $$
 $$ \text{where $\Delta_m^{\lambda}f_{n}=f_n-(f_{n-m}+\lambda)$ for all $n\ge m$}. $$
 We write $\Delta^{\lambda}$ for $\Delta^{\lambda}_1$. For a partition $\lambda=(\lambda_1,...,\lambda_l)$ and $n\ge \lambda_1$ we write$(n,\lambda)$ for $(n,\lambda_1,\lambda_2,...,\lambda_l)$. We consider sequences of Schur functions of the form $\{s_{(n,\lambda)}\}_{n\ge\lambda_1}\in \Lambda_{\NN}^{|\lambda|,1}$. Let $\alpha_1,...,\alpha_k\in\NN$ and $\lambda_1,...,\lambda_k$ be number partitions. Then the sequence of products $\{s_{(n+\alpha_1,\lambda_1)}\cdots s_{(n+\alpha_k,\lambda_k)}\}_n$ is an element of $\Lambda_{\NN}^{\alpha_1+|\lambda_1|+\cdots \alpha_k+|\lambda_k|,k}$. We can apply difference operators $\Delta_{|\mu|/k}^{\mu}$ on it where $|\mu|$ is a multiple of $k$. Now, we are in position to formulate our main theorem.
\begin{Theorem}\label{main2}
Let $\alpha_1\ge\alpha_2\ge \alpha_3\ge \ge 0$ and $\lambda_1,\lambda_2,\lambda_3$ be number partitions. Then
\begin{itemize}
\item[(a)]
$$ \Delta^{(2)}\Delta^{(1,1)}(s_{(n+\alpha_1,\lambda_1)}s_{(n+\alpha_2,\lambda_2)})=0~\text{for all $n>|\lambda_1|+|\lambda_2|+1-\alpha_2$}.  $$
The set $\{\Delta^{(2)},\Delta^{(1,1)}\}$ is minimal in the sense that the above sequence is not eventually zero if we remove one of the difference operators.
\item[(b)]
$$ \Delta_2^{(3,3)}\Delta^{(3)}\Delta^{(2,1)}\Delta^{(1,1,1)}(s_{(n+\alpha_1,\lambda_1)}s_{(n+\alpha_2,\lambda_2)}s_{(n+\alpha_3,\lambda_3)})=0$$  $$\text{for all $n>\max\{4,\alpha_1-\alpha_2+l(\lambda_1)\}+2(|\lambda_1|+|\lambda_2|+|\lambda_3|+1)-\alpha_3$}.  $$
The set $\{\Delta_2^{(3,3)},\Delta^{(3)},\Delta^{(2,1)},\Delta^{(1,1,1)}\}$ is minimal in the sense that the above sequence is not eventually zero if we remove one of the difference operators.
\end{itemize}
\end{Theorem}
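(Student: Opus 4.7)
The plan is to expand each product of Schur functions by the Littlewood-Richardson rule and then group the resulting terms $s_\nu$ by the growth pattern of $\nu$ as $n$ varies. Concretely, I would first write
$$ s_{(n+\alpha_1,\lambda_1)}s_{(n+\alpha_2,\lambda_2)} = \sum_\nu c^\nu_n\, s_\nu, $$
and argue that, once $n$ exceeds the stated threshold, every contributing $\nu$ has at most two rows whose lengths depend linearly on $n$ (``long rows'') and a bounded tail in the remaining rows. The classical stability of Littlewood-Richardson coefficients under adding boxes to a first row then gives that $c^\nu_n$ depends only on the tail and on the relative positions of the long rows.

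For part (a) the only two possible growth patterns for a degree-$2n$ product are $(2)$ (one long row of length $2n+c$) and $(1,1)$ (two long rows of lengths $n+c_1, n+c_2$). Splitting $f_n = A_n + B_n$ according to this dichotomy, each summand of $B_n$ is literally the shift by $(1,1)$ of its counterpart in $B_{n-1}$, so $\Delta^{(1,1)}B_n = 0$; and $\Delta^{(1,1)}A_n$ is a finite combination of one-long-row sequences $\{s_{(2n+c',\mu')}\}$, each of which shifts by $(2)$ from $n-1$ to $n$ and is therefore annihilated by $\Delta^{(2)}$. Hence $\Delta^{(2)}\Delta^{(1,1)} f_n = 0$. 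Minimality is verified via the Pieri identity $s_{(n)}s_{(n)} = \sum_{k=0}^n s_{(2n-k,k)}$: one computes $\Delta^{(2)}f_n = s_{(n,n)}\neq 0$ and checks that $\Delta^{(1,1)}f_n$ retains a nonvanishing one-long-row tail.

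For part (b) the same plan yields four possible growth patterns in a degree-$3n$ product: $(1,1,1)$, $(2,1)$, $(3)$, and the genuinely $2$-periodic pattern $(3,3)$ of two ``balanced'' rows of average rate $3/2$. The last class really appears: for $s_{(n)}^3$ the balanced two-row contribution alternates between $s_{(3k,3k)}$ (when $n=2k$) and $s_{(3k+2,3k+1)}$ (when $n=2k+1$), and these are exactly the sequences killed by $\Delta_2^{(3,3)}$ but not by $\Delta^{(2,1)}$. After splitting $f_n$ into four pieces by pattern, I would apply the operators from the inside out: $\Delta^{(1,1,1)}$ kills the $(1,1,1)$-piece and turns each other piece into a finite difference of sequences of its own type; $\Delta^{(2,1)}$ then kills the $(2,1)$-piece and the $\Delta^{(1,1,1)}$-image of it; $\Delta^{(3)}$ handles the one-long-row residue; and $\Delta_2^{(3,3)}$ finally annihilates the $2$-periodic residue. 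Minimality is checked again on an explicit example such as $s_{(n)}^3$, removing each operator in turn.

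The main technical obstacle is pinning down the precise thresholds: the bound $n>|\lambda_1|+|\lambda_2|+1-\alpha_2$ in part (a) and its more elaborate three-factor analogue in part (b) have to be extracted from a careful analysis of exactly when Littlewood-Richardson tableaux with prescribed long rows stay in bijection as $n$ grows. The second delicate point, specific to part (b), is the $(3,3)$-piece: one has to verify that the residues produced by $\Delta^{(1,1,1)}$, $\Delta^{(2,1)}$ and $\Delta^{(3)}$ do not introduce additional period-$2$ contributions, so that $\Delta_2^{(3,3)}$ applied last suffices---this is what makes the exact ordering of the four operators essential.
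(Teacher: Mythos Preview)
Your outline is plausible and morally correct, but it diverges substantially from the paper's argument, and there is one genuine misconception.

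\textbf{How the paper proceeds.} The paper does not attack the general product $s_{(n+\alpha_1,\lambda_1)}\cdots$ directly via Littlewood--Richardson. Instead it first uses the Jacobi--Trudi identity to expand each $s_{(n+\alpha_i,\lambda_i)}$ along its first row, reducing Theorem~1.1 to the special case of products $s_{(n+\alpha_1)}\cdots s_{(n+\alpha_k)}s_\beta$ of complete homogeneous symmetric functions times a fixed Schur function (Lemma~2.2). A second reduction strips off $s_\beta$ by splitting each skew tableau into a bounded ``centre'' and a growing ``arm'' (Lemmas~2.4--2.6). The core case $\prod s_{(n+\alpha_i)}$ is then handled by a polytope model: lattice points of a polytope $P_{k,n,\alpha}$ encode the SSYT of weight $(n+\alpha_1,\ldots,n+\alpha_k)$, and each $\Delta$ is realised as an explicit injection whose complement is a union of \emph{faces} of $P_{k,n,\alpha}$. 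The proof of (b) is a hands-on computation showing the faces shrink to a short explicit list, which $\Delta_2^{(3,3)}$ then kills. The thresholds come for free from the face descriptions.

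\textbf{What your approach buys and costs.} Your growth-pattern decomposition is a good conceptual picture and, for part~(a), can be made rigorous essentially as you say. For part~(b), however, the split into four disjoint ``pattern pieces'' is not as clean as you suggest: the two-row terms $s_{(3n-k,k)}$ in $s_{(n)}^3$ do not a priori separate into a $(3)$-piece, a $(2,1)$-piece and a $(3,3)$-piece, because the multiplicities interpolate continuously in $k$. What actually happens (and what the paper shows) is that only \emph{after} applying $\Delta^{(1,1,1)}\Delta^{(2,1)}\Delta^{(3)}$ does one isolate the one or two balanced terms $s_{(\lfloor(3n+|\alpha|)/2\rfloor+\epsilon,\ldots)}$ that $\Delta_2^{(3,3)}$ kills; they are a residue of the computation, not a summand one can carve out in advance. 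Your sketch would need to be reorganised along these lines.

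\textbf{A misconception.} Your final sentence asserts that ``the exact ordering of the four operators [is] essential.'' It is not: the paper proves (Lemma~2.1) that all the $\Delta_m^\lambda$ commute, so the composite is independent of order. The ordering matters only for the bookkeeping in the proof, not for the statement. In particular, you need not worry that applying $\Delta_2^{(3,3)}$ earlier could ``introduce additional period-$2$ contributions'' --- the final result is the same.
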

There is experimental evidence that an analogous statement about fourfold products holds. We formulate this in the following conjecture. We do not provide a complete proof of this statement but show how our methods indicate its validity until reaching a point where the number of cases to discuss is massive.   
\begin{Conjecture}\label{vierfach}
Let $\alpha_1\ge\alpha_2\ge \alpha_3\ge \alpha_4\ge 0$ and $\lambda_1,\lambda_2,\lambda_3,\lambda_4$ be number partitions. Then
$$ \Delta_3^{(4,4,4)}\Delta_2^{(3,3,2)}\Delta^{(4)}\Delta^{(3,1)}(\Delta^{(2,2)})^2(\Delta^{(2,1,1)})^2\Delta^{(1,1,1,1)}
 (s_{(n+\alpha_1,\lambda_1)}s_{(n+\alpha_2,\lambda_2)}s_{(n+\alpha_3,\lambda_3)}s_{(n+\alpha_4,\lambda_4)})=0  $$
 $$ \text{for sufficiently large $n$}. $$
 The multiset $\{\Delta_3^{(4,4,4)},\Delta_2^{(3,3,2)},\Delta^{(4)},\Delta^{(3,1)},\Delta^{(2,2)},\Delta^{(2,2)},\Delta^{(2,1,1)},\Delta^{(2,1,1)},\Delta^{(1,1,1,1)}\}$ is minimal in the sense that the above sequence is not eventually zero if we remove one of the difference operators.
\end{Conjecture}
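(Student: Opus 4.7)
The plan is to mimic the proof of \ref{main2}(b) for threefold products, carried out in the fourfold setting. The starting point is an iterated Littlewood--Richardson expansion of $s_{(n+\alpha_1,\lambda_1)} s_{(n+\alpha_2,\lambda_2)} s_{(n+\alpha_3,\lambda_3)} s_{(n+\alpha_4,\lambda_4)}$ into a sum $\sum_\nu c_\nu(n)\, s_\nu$. The key observation is that, once $n$ is large enough, every partition $\nu$ that contributes has the structure \emph{at most four growing rows in fixed positions, plus a bounded tail depending only on $\lambda_1,\ldots,\lambda_4$ and on certain fixed LR coefficients}. Once this stabilization is established, the theorem reduces to showing that the listed product of difference operators annihilates each stable family.

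The second step is a classification of stable families. I would index each family by a weakly decreasing rational vector $r=(r_1,r_2,r_3,r_4)\in\QQ^4_{\ge 0}$ with $r_1+r_2+r_3+r_4=4$, encoding the growth rates of the four growing rows as $n\to n+1$. Each family is then annihilated by $\Delta_m^\mu$ with $m$ the smallest positive integer making every $mr_i$ an integer and $\mu=(mr_1,\ldots,mr_4)$. The integer growth vectors give the five operators $\Delta^{(4)}, \Delta^{(3,1)}, \Delta^{(2,2)}, \Delta^{(2,1,1)}, \Delta^{(1,1,1,1)}$; the rational cases $r=(4/3,4/3,4/3,0)$ and $r=(3/2,3/2,1,0)$ give $\Delta_3^{(4,4,4)}$ and $\Delta_2^{(3,3,2)}$. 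A nontrivial part of this step is to rule out further rational patterns such as $r=(5/4,5/4,5/4,1/4)$ or $r=(3/2,1,1,1/2)$: one has to argue, using the Pieri/LR constraints on how rows of $s_{(n+\alpha_i,\lambda_i)}$ can merge under multiplication, that these simply do not occur in the stable expansion.

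The squared multiplicities $(\Delta^{(2,2)})^2$ and $(\Delta^{(2,1,1)})^2$ should be explained by showing that the $r=(2,2,0,0)$ and $r=(2,1,1,0)$ families each split into two independent subfamilies -- arising from the two distinct ways three of the four factors can contribute a growing row while the fourth does not, modulated by how the residual tail attaches. A single $\Delta^\mu$ then annihilates only one subfamily, whereas $(\Delta^\mu)^2$ kills both. For the minimality claim I would, for each operator in the multiset, exhibit explicit $\alpha_i$ and $\lambda_i$ producing a surviving subfamily of the corresponding type, so that removing that one operator leaves a sequence which is not eventually zero.

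The main obstacle, and the reason the paper states this only as a conjecture, is the sheer number of cases. Already in \ref{main2}(b) a handful of subfamilies had to be enumerated and matched with operators; in the fourfold case the number of LR-admissible configurations for $\nu$, and the interplay of stable tails across four factors, expands substantially. In particular, pinning down the precise \emph{multiplicity} of each $\Delta^\mu$ (that $(\Delta^{(2,2)})^2$ and not $(\Delta^{(2,2)})^3$ suffices, say) requires a careful enumeration of all stable tails attached to each $r$-family, which is where the argument becomes unwieldy. A possible compactification route is to encode the growing behavior of $s_{(n+\alpha,\lambda)}$ via the Jacobi--Trudi formula as a polynomial-like object in $n$ with coefficients in a fixed subring of $\Lambda$, and then to read off the operator product directly from degree considerations; but even this reformulation appears to require the same case split in the end.
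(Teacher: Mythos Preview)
The paper does not prove this statement --- it is left as a conjecture, and only partial evidence is given --- so there is no complete argument to compare against. What the paper does offer, however, is methodologically quite different from your sketch.

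The paper's route is: first reduce via Jacobi--Trudi to products of \emph{homogeneous} symmetric functions $s_{(n+\alpha_1)}\cdots s_{(n+\alpha_4)}$ (what you mention at the very end as a ``possible compactification route'' is in fact the paper's opening reduction, \ref{fromsntosnlambda}). Such a product is then written as a sum over the lattice points of an explicit polytope $P_{4,n,\alpha}$ of partial triangular matrices. Each difference operator $\Delta^\mu_m$ is realised concretely as an injection $P_{4,n-m,\alpha}^{\ZZ}\to P_{4,n,\alpha}^{\ZZ}$ whose complement is a union of faces of strictly smaller dimension. The paper carries this out through $\Delta^{(1^4)}$, $(\Delta^{(2,1,1)})^2$, and one $\Delta^{(2,2)}$, producing a chain of face-unions of dimensions $6>5>4>3>2$, and stops because the number of faces in the union then explodes.

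Your growth-rate heuristic is an appealing organising principle but, as written, has real gaps. The assertion that every stable family is governed by a rational vector $r$ with $\sum r_i=4$, and that only the seven vectors you list occur, is exactly the content that would need proof; in the paper's language it amounts to classifying which faces of $P_{4,n,\alpha}$ survive each reduction, which is the case analysis neither you nor the paper completes. Your account of the squared operators is also not what happens in the paper: there the second $\Delta^{(2,1,1)}$ uses a \emph{different} injection from the first (shifting $a_{11},a_{13},a_{21},a_{32}$ rather than $a_{11},a_{14},a_{21},a_{31}$), and the repetition is forced because the first application leaves a union of two facets rather than one --- not because of two independent ``tail'' subfamilies. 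Neither approach, in its present form, closes the conjecture.
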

We show in \ref{commute} that every difference operator $\Delta_m^{\lambda}$ is linear so that we can expand the theorem to a wider set of symmetric function sequences.
A sequence $\{f_n\}_n\in\Lambda_{\NN}^{n_0,1}$ stabilizes at $N\in \NN$ in the sense of Church and Farb \cite{CF} if 
$$ \Delta^{(1)}(f_n)=0~\text{for all $n>N$}. $$
We get the following corollary.
\begin{Corollary}\label{main}
Let $m_1,m_2,m_3,N_1,N_2,N_3\in \NN$ and $\{f^{(1)}_n\}_n,\{f^{(2)}_n\}_n,\{f^{(3)}_n\}_n$ be sequences such that $\{f^{(i)}_n\}_n\in \Lambda_{\NN}^{m_i,1}$ stabilizes at $N_i$ for all $i\in\{1,2,3\}$. Then
\begin{itemize}
\item[(a)]
$$ \Delta^{(2)}\Delta^{(1,1)}(f^{(1)}_nf^{(2)}_n)=0 $$
$$\text{for all $n>\max\{N_1,N_2,m_1+m_2+N_1+N_2+\max\{N_1,N_2\}-2\}$}.  $$
The set $\{\Delta^{(2)},\Delta^{(1,1)}\}$ is minimal in the sense that the above sequence is not eventually zero if we remove one of the difference operators.
\item[(b)]
$$ \Delta_2^{(3,3)}\Delta^{(3)}\Delta^{(2,1)}\Delta^{(1,1,1)}(f^{(1)}_nf^{(2)}_nf^{(3)}_n)=0  $$
$$ \text{for all $n>\max\{N_1,N_2,2(m_1+m_2+m_3+\max\{m_1,m_2,m_3\})+3(N_1+N_2+N_3)-7\}$}. $$
The set $\{\Delta_2^{(3,3)},\Delta^{(3)},\Delta^{(2,1)},\Delta^{(1,1,1)}\}$ is minimal in the sense that the above sequence is not eventually zero if we remove one of the difference operators.
\end{itemize}
\end{Corollary}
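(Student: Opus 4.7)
The plan is to reduce the corollary to Theorem~\ref{main2} by means of the $\ZZ$-linearity of the difference operators established in \ref{commute}. The key step is a decomposition: for every stabilizing sequence $\{f_n\} \in \Lambda_\NN^{m,1}$ with stabilization index $N$, iterating the identity $f_n = f_{n-1} + (1)$ starting from the Schur expansion $f_N = \sum_{\mu \vdash N+m} c_\mu s_\mu$ yields
\[
f_n \;=\; \sum_{\mu \vdash N+m} c_\mu\, s_{(n + \mu_1 - N,\, \mu_2, \ldots, \mu_{l(\mu)})} \qquad \text{for all } n \ge N.
\]
Hence each $f^{(i)}_n$ is, for $n \ge N_i$, a finite $\ZZ$-linear combination of basic Schur sequences $\{s_{(n+\alpha, \lambda)}\}_n$ with $\alpha = \mu_1 - N_i \in [1-N_i,\, m_i]$ and $|\lambda| = m_i - \alpha$, $l(\lambda) \le m_i - \alpha$.

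Expanding the product $f^{(1)}_n f^{(2)}_n f^{(3)}_n$ via these decompositions (valid once $n \ge \max_i N_i$) and using the linearity from \ref{commute}, the claim reduces to showing that each summand --- a triple product of basic Schur sequences --- is annihilated by the operator composition. Commutativity of multiplication lets me reorder the three factors so that the shifts satisfy $\alpha_1 \ge \alpha_2 \ge \alpha_3$, and a translation of the index variable (which intertwines with every difference operator) further reduces to $\alpha_3 \ge 0$; Theorem~\ref{main2}(b) then applies directly. Part~(a) of the corollary is obtained by exactly the same argument, invoking Theorem~\ref{main2}(a). Minimality transfers for free, since each basic sequence $\{s_{(n+\alpha,\lambda)}\}_n$ is itself a stabilizing element of $\Lambda_\NN^{|\lambda|+\alpha,1}$ (stabilization index $\max\{0,\lambda_1-\alpha\}$), so the minimality statements in Theorem~\ref{main2} apply verbatim.

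The quantitative bound arises from maximizing the Theorem~\ref{main2}(b) bound
\[
\max\{4,\, \alpha_1 - \alpha_2 + l(\lambda_1)\} + 2(|\lambda_1|+|\lambda_2|+|\lambda_3|+1) - \alpha_3
\]
across all summands of the decomposition. Substituting $|\lambda_i| = m_{k_i} - \alpha_i$ together with the extremal choice $\mu^{(i)} = (1,\ldots,1)$ (so $\alpha_i = 1 - N_i$ and $l(\lambda_i) = m_i + N_i - 1$), while assigning the factor carrying the largest $l(\lambda_1)$ to the source with maximal $m$, recovers the closed-form bound in the statement; the term $\max\{N_1, N_2, \ldots\}$ is only what is needed to guarantee validity of the decomposition itself. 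This last step is the main obstacle: Theorem~\ref{main2}(b)'s bound contains a maximum of two expressions that scale differently in the parameters, so pinning down the overall worst summand --- and verifying that its bound collapses to the succinct closed form in the statement --- requires a short but careful case analysis across the possible source-to-factor assignments.
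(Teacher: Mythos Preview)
Your proposal is correct and follows essentially the same route as the paper: decompose each stabilizing sequence $f^{(i)}_n$ (for $n\ge N_i$) as a $\ZZ$-linear combination of basic Schur sequences $s_{(n+\alpha_i,\lambda_i)}$ with $\alpha_i\ge 1-N_i$ and $|\lambda_i|=m_i-\alpha_i$, invoke the linearity of the difference operators from \ref{commute}(ii), apply \ref{main2} termwise, and then maximize the resulting bound over all summands. Your treatment is in fact slightly more explicit than the paper's in two places --- the index translation needed to force $\alpha_3\ge 0$ before invoking \ref{main2}, and the observation that minimality transfers because basic Schur sequences are themselves stabilizing --- but these are refinements of the same argument, not a different one.
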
 
In the same way, \ref{vierfach} is equivalent to the following statement.
\begin{Conjecture}
Let $m_1,...,m_4,N_1,...,N_4\in\NN$ and $\{f^{(1)}_n\}_n,\{f^{(2)}_n\}_n,\{f^{(3)}_n\}_n,\{f^{(4)}_n\}_n$ be sequences such that $\{f^{(i)}_n\}_n\in \Lambda_{\NN}^{m_i,1}$ stabilizes at $N_i$ for all $i\in\{1,2,3,4\}$. Then
$$ \Delta_3^{(4,4,4)}\Delta_2^{(3,3,2)}\Delta^{(4)}\Delta^{(3,1)}(\Delta^{(2,2)})^2(\Delta^{(2,1,1)})^2\Delta^{(1,1,1,1)}\big(f^{(1)}_nf^{(2)}_nf^{(3)}_nf^{(4)}_n\big)=0  $$
 $$ \text{for sufficiently large $n$}. $$
 The multiset $\{\Delta_3^{(4,4,4)},\Delta_2^{(3,3,2)},\Delta^{(4)},\Delta^{(3,1)},\Delta^{(2,2)},\Delta^{(2,2)},\Delta^{(2,1,1)},\Delta^{(2,1,1)},\Delta^{(1,1,1,1)}\}$ is minimal in the sense that the above sequence is not eventually zero if we remove one of the difference operators.
\end{Conjecture}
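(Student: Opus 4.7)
The plan is to reduce the statement to \ref{vierfach} by combining the stability hypothesis on each factor with the linearity of the difference operators that will be established in \ref{commute}. First I would record that a sequence $\{f_n\}\in\Lambda_{\NN}^{m,1}$ stabilizes at $N$ exactly when, for every $n>N$, its Schur expansion has the form
$$ f_n=\sum_{\mu}c_\mu\, s_{(n+m-|\mu|,\mu)}, $$
where the $c_\mu$ and the finite set of occurring $\mu$ are independent of $n$. The occurring partitions $\mu$ all satisfy $|\mu|\le N+m$, because they are read off from the tails of the Schur components of the single symmetric function $f_{N+1}\in\Lambda_{N+1+m}$.

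Substituting these stable expansions of $f^{(1)}_n,\ldots,f^{(4)}_n$ into the product and distributing writes, for $n>\max_i N_i$, the sequence $\{f^{(1)}_nf^{(2)}_nf^{(3)}_nf^{(4)}_n\}_n$ as a finite $\ZZ$-linear combination of sequences of the form
$$ \bigl\{s_{(n+\alpha_1,\lambda_1)}s_{(n+\alpha_2,\lambda_2)}s_{(n+\alpha_3,\lambda_3)}s_{(n+\alpha_4,\lambda_4)}\bigr\}_n, $$
the tuples $(\alpha_i,\lambda_i)$ ranging over a finite set controlled by $(m_i,N_i)$. By \ref{commute} the whole string of difference operators in the statement distributes over this combination, and since the product of Schur functions is commutative each summand can be reindexed so that $\alpha_1\ge\alpha_2\ge\alpha_3\ge\alpha_4\ge 0$. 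Applying \ref{vierfach} to each reindexed summand and taking the maximum of the resulting thresholds yields the required bound beyond which the entire expression vanishes.

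For the minimality assertion I would specialize to $f^{(i)}_n=s_{(n+\alpha_i,\lambda_i)}$, which stabilizes at $\lambda_{i,1}$. With this choice the product is precisely one of the Schur-function products treated in \ref{vierfach}, so the minimality part of that conjecture transfers verbatim: removing any single operator from the multiset leaves a sequence which, for a suitable choice of the parameters $(\alpha_i,\lambda_i)$, is not eventually zero.

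The chief obstacle is twofold. Logically, the present statement can only be established conditionally on \ref{vierfach}, which the paper flags as open because the case analysis becomes massive. Quantitatively, producing an explicit threshold analogous to the bound in \ref{main}(b) requires bounding the shift parameters $\alpha_i$ and the partition sizes $|\lambda_i|$ occurring in the Schur expansions of $f^{(i)}_n$ in terms of $(m_i,N_i)$, and then inserting these bounds into the (currently unspecified) threshold appearing in \ref{vierfach}. The combinatorics of this optimization is routine but tedious; the resulting bound is surely far from sharp and would mirror the nested $\max$-formula in \ref{main}(b), with an additional contribution reflecting the presence of the new operator $\Delta_3^{(4,4,4)}$.
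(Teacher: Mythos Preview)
Your reduction is exactly the paper's approach: the paper simply asserts that this conjecture is equivalent to \ref{vierfach} ``in the same way'' that \ref{main} follows from \ref{main2}, i.e.\ by expanding each stabilizing $f^{(i)}_n$ into its finite stable Schur sum and invoking the linearity from \ref{commute}(ii). Your write-up is in fact more detailed than the paper's one-line cross-reference, and you correctly flag that the argument is only conditional on the unproven \ref{vierfach}; one small remark is that after reordering the factors the smallest $\alpha_i$ need not be nonnegative, so an index shift $n\mapsto n-\min_i\alpha_i$ is implicitly needed before \ref{vierfach} applies.
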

We formulate a statement about products of arbitrariely many stabilizing sequences as a question.
\begin{Question}
Let $k\ge 1$ and  $\{f^{(1)}_n\}_n\in\Lambda_{\NN}^{m_1,1},...,\{f^{(k)}_n\}_n\in\Lambda_{\NN}^{m_k,1}$ be stabilizing sequences. Let $\{\lambda_1,...,\lambda_r\}$ be the set of partitions of the numbers $k,2k,...,(k-1)k$. Is there a set of nonnegative integers $\{q_1,...,q_{k-1}\}$ and a number $N$ such that
$$ (\Delta_{|\lambda_1|/k}^{\lambda_1})^{q_1}\cdots (\Delta_{|\lambda_r|/k}^{\lambda_r})^{q_r} \left(f^{(1)}_n\cdots f^{(k)}_n \right)=0~\text{for all $n>N$} $$
and $q_1+...+q_{k-1}$ is minimal with this property
and how can we compute the numbers $q_1,...,q_r$ and $N$?
\end{Question}
By \ref{commute} (ii) this is equivalent to 
\begin{Question}
Let $k\ge 1$, $\alpha_1\ge \dots \ge \alpha_4\ge 0$ and $\mu_1,\dots ,\mu_k$ be number partitions. We consider the sequence $\{s_{(n+\alpha_1,\mu_1)}\cdots s_{(n+\alpha_k,\mu_k)}\}_n\in\Lambda_{\NN}^{\alpha_1+|\mu_1|+\cdots +\alpha_k+|\mu_k|,k}$. Let $\{\lambda_1,...,\lambda_r\}$ be the set of partitions of the numbers $k,2k,...,(k-1)k$. Is there a set of nonnegative integers $\{q_1,...,q_{k-1}\}$ and a number $N$ such that
$$ (\Delta_{|\lambda_1|/k}^{\lambda_1})^{q_1}\cdot (\Delta_{|\lambda_r|/k}^{\lambda_r})^{q_r} \left(s_{(n+\alpha_1,\mu_1)}\cdots s_{(n+\alpha_k,\mu_k)} \right)=0~\text{for all $n>N$} $$
and $q_1+...+q_{k-1}$ is minimal with this property
and how can we compute the numbers $q_1,...,q_r$ and $N$?
\end{Question}
The rest of the paper is dedicated to the proof of \ref{main2} and \ref{main}.
\section{Reduction to homogeneous symmetric functions}
We show in the following lemma that the difference operators commute such that we are free to choose their order and that they are linear.  
\begin{Lemma}\label{commute}
Let $\lambda$ and $\mu$ be partitions, $m$ a divisor of $|\lambda|$ and $l$ a divisor of $|\mu|$ with $|\mu|/l=|\lambda|/m$. Let $n_0\ge 0$ and $\{f_n\}_n\in\Lambda_{\NN}^{n_0,|\lambda|/m}$. Then 
\begin{itemize}
\item[(i)]
$$ \Delta^{\lambda}_m\Delta^{\mu}_l(f_n)=\Delta^{\mu}_l\Delta^{\lambda}_m(f_n)=~\text{for all $n$}. $$
\item[(ii)] The map $\Delta_m^{\lambda}:\Lambda_{\NN}^{n_0,|\lambda|/m}\rightarrow \Lambda_{\NN}^{n_0,|\lambda|/m}$ is linear.
\end{itemize}
\end{Lemma}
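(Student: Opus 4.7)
The plan is to prove (ii) first, since once linearity of each operator is in hand, (i) reduces to a direct calculation in which the only nontrivial move is distributing ``$+\lambda$'' across a subtraction.

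For (ii), I would observe that $\Delta_m^\lambda$ is built from two manifestly $\ZZ$-linear operations on sequences: the shift $\{f_n\}_n \mapsto \{f_{n+m}\}_n$, and the pointwise partition-shift $\{f_n\}_n \mapsto \{f_n+\lambda\}_n$. The shift is linear for free. The partition-shift is linear on each $\Lambda_n$ because, by the definition given in the introduction, $h \mapsto h+\lambda$ is the linear extension of the rule $s_\mu \mapsto s_{\mu+\lambda}$. Being the difference of two linear maps, $\Delta_m^\lambda$ is itself linear.

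For (i), I would unfold both compositions from the definition $\Delta_m^\lambda f_n = f_n - (f_{n-m}+\lambda)$. Writing $g_n := f_{n+l} - (f_n+\mu)$ for the $n$-th entry of $\Delta_l^\mu\{f_n\}_n$ and using (ii) to distribute $+\lambda$ across the subtraction inside $g_n + \lambda$, the $n$-th entry of $\Delta_m^\lambda \Delta_l^\mu\{f_n\}_n$ becomes
\[
f_{n+m+l} - (f_{n+m}+\mu) - (f_{n+l}+\lambda) + \bigl((f_n+\mu)+\lambda\bigr).
\]
The analogous computation for the opposite order produces an expression whose first three summands are identical (they are symmetric in the pairs $(\lambda,m)$ and $(\mu,l)$) and whose final summand is $(f_n+\lambda)+\mu$. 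Hence the claim reduces to the identity $(f_n+\mu)+\lambda = (f_n+\lambda)+\mu$, which by linearity I would verify on a single Schur function $s_\nu$: both sides equal $s_{\nu+\lambda+\mu}$ by commutativity and associativity of componentwise partition addition.

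Beyond this unfolding, the only genuine input is (ii), invoked to move $+\lambda$ past a subtraction. I expect the principal obstacle to be purely notational, namely tracking the two shift indices $m$ and $l$ together with the convention $\Delta_m^\lambda f_n = f_n - (f_{n-m}+\lambda)$ consistently throughout both compositions; no deeper structural ingredient is needed.
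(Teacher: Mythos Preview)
Your proposal is correct and follows essentially the same approach as the paper: both arguments identify $\Delta_m^\lambda$ as the difference of the shift operator and the linear-by-definition partition-shift $f\mapsto f+\lambda$, and both verify commutativity by a direct unfolding of the double composition. The only cosmetic difference is that you prove (ii) first and invoke it explicitly to justify distributing $+\lambda$ across a subtraction, whereas the paper does (i) first and leaves that distributivity (and the identity $(f+\mu)+\lambda=f+\mu+\lambda$) implicit.
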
 
\begin{proof}
(i) 
We have
$$  \Delta^{\lambda}_m\Delta^{\mu}_lf_n=\Delta^{\lambda}_m(f_n-(f_{n-l}+\mu))=f_n-(f_{n-l}+\mu)-(f_{n-m}+\lambda)+(f_{n-l-m}+\mu+\lambda) $$
$$ =\Delta^{\mu}_l(f_n-(f_{n-m}+\lambda))=\Delta^{\mu}_l\Delta^{\lambda}_mf_n. $$
(ii) $\Delta_m^{\lambda}$ is the difference of the shift operator $\{f_n\}_n\mapsto \{f_{n+m}\}_n$ which is linear and the map $\{f_n\}_n \mapsto \{f_n+\lambda\}_n$ which is a linear extension.
\end{proof}
It follows from \ref{commute} (i) that we can define iterated products $\prod_{\Delta\in D} \Delta$ over sets of difference operators $D=\{\Delta_{m_1}^{\lambda_1},\dots, \Delta_{m_r}^{\lambda_r}\}$.
Now, we prove \ref{main} using \ref{main2} and \ref{commute} (ii).
\begin{proof}
(a) The sequence $\{f_n^{(i)}\}_n\in \Delta_{\NN}^{m_i,1}$ stabilizes at $N_i$ for every $i\in\{1,2\}$. It follows that $f_{n}^{(i)}$ is a linear combination of Schur functions $s_{(n+\alpha_i,\lambda_i)}$ with $\alpha_i\ge 1-N_i$ and $\lambda_i\vdash m_i-\alpha_i$ for all $n\ge N_i$. \ref{main2} yields that
$$ \Delta^{(2)}\Delta^{(1,1)}(s_{(n+\alpha_1,\lambda_1)}s_{(n+\alpha_2,\lambda_2)})=0~\text{for all $n>|\lambda_1|+|\lambda_2|+1-\min\{\alpha_1,\alpha_2\}$}. $$
We have
$$ |\lambda_1|+|\lambda_2|+1-\min\{\alpha_1,\alpha_2\}\le m_1+m_2+N_1+N_2+\max\{N_1,N_2\}-2. $$
The claim follows from \ref{commute} (ii).\\
(b) The sequence $\{f_n^{(i)}\}_n\in \Delta_{\NN}^{m_i,1}$ stabilizes at $N_i$ for every $i\in\{1,2,3\}$. It follows that $f_{n}^{(i)}$ is a linear combination of Schur functions $s_{(n+\alpha_i,\lambda_i)}$ with $m_i\ge\alpha_i\ge 1-N_i$ and $\lambda_i\vdash m_i-\alpha_i$ for all $n\ge N_i$. Suppose $\alpha_1\ge\alpha_2\ge \alpha_3$. \ref{main2} yields that
$$ \Delta_2^{(3,3)}\Delta^{(3)}\Delta^{(2,1)}\Delta^{(1,1,1)}(s_{(n+\alpha_1,\lambda_1)}s_{(n+\alpha_2,\lambda_2)}s_{(n+\alpha_3,\lambda_3)})=0$$  $$\text{for all $n>\max\{4,\alpha_1-\alpha_2+l(\lambda_1)\}+2(|\lambda_1|+|\lambda_2|+|\lambda_3|+1)-\alpha_3$}. $$
We have
$$ \max\{4,\alpha_1-\alpha_2+l(\lambda_1)\}+2(|\lambda_1|+|\lambda_2|+|\lambda_3|+1)-\alpha_3 $$
$$ \le m_1+N_2-1+m_1+N_1-1+2(m_1+m_2+m_3+N_1+N_2+N_3-2)+N_3-1 $$
$$ =4m_1+2m_2+2m_3+3(N_1+N_2+N_3)-7 $$
The claim follows from \ref{commute} (ii).
\end{proof}
We want to show next that we can restrict to products of homogeneous symmetric functions $s_{(n)}$ if we additionally multiply with a constant sequence. 
 \begin{Lemma}\label{fromsntosnlambda}
 Let $k\in \NN$ and $\lambda_1,...,\lambda_m$ be number partitions of multiples of $k$. 
 If for all number partitions $\beta$ and $\alpha_1,...,\alpha_{k-1}\in \NN$ the sequence $\{s_{(n+\alpha_1)}\cdots s_{(n+\alpha_{k-1})}s_{(n)}s_{\beta}\}_n\in \Lambda_{\NN}^{\alpha_1+\cdots +\alpha_{k-1}+|\beta|,k}$ fulfills
 $$ \Delta^{\lambda_1}_{|\lambda_1|/k}...\Delta^{\lambda_m}_{|\lambda_m|/k} (s_{(n+\alpha_1)}\cdots s_{(n+\alpha_{k-1})}s_{(n)}s_{\beta})=0 $$
 for all $n$ greater than some number $n_0(\alpha_1,...,\alpha_{k-1},|\beta|)$ then 
  for all number partitions $\mu_1,...,\mu_k$ and $\alpha_1,...,\alpha_{k-1}\in \NN$ the sequence\\
 $\{s_{(n+\alpha_1,\mu_1)}\cdots s_{(n+\alpha_{k-1},\mu_{k-1})}s_{(n,\mu_k)}\}_n\in \Lambda_{\NN}^{\alpha_1+\cdots +\alpha_{k-1}+|\mu_1|+\cdot +|\mu_k|,k}$ fulfills
 $$ \Delta^{\lambda_1}_{|\lambda_1|/k}...\Delta^{\lambda_m}_{|\lambda_1|/k} (s_{(n+\alpha_1,\mu_1)}\cdots s_{(n+\alpha_{k-1},\mu_{k-1})}s_{(n,\mu_k)})=0 $$
 for all $n>\max\{n_0(\alpha_1+i_1,...,\alpha_k+i_k,|\mu_1|-i_1+...+|\mu_k|-i_k)~|~i_q\in\{0,...,l(\mu_q)\}~\text{for all $q\in \{1,...,k\}$}\}$.
 \end{Lemma}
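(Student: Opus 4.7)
The plan is to expand each factor $s_{(n+\alpha_q,\mu_q)}$ via the Jacobi--Trudi identity, thereby reducing the general product to a $\ZZ$-linear combination of products of the homogeneous form covered by the hypothesis, and then invoke the linearity established in \ref{commute} (ii).

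The key ingredient is a decomposition lemma for $s_{(n,\mu)}$: for any fixed partition $\mu$ and all $n\ge \mu_1$, expanding the Jacobi--Trudi determinant for $s_{(n,\mu)}$ along its first row yields
$$ s_{(n,\mu)}=\sum_{i=0}^{l(\mu)}(-1)^{i}\,h_{n+i}\cdot M_{i}(\mu), $$
where each $M_{i}(\mu)$ is a fixed polynomial in the $h_{\mu_r-r+s}$ (hence independent of $n$) of degree $|\mu|-i$. Expanding each $M_{i}(\mu)$ in the Schur basis and using $h_{n+i}=s_{(n+i)}$, we see that $s_{(n,\mu)}$ is a $\ZZ$-linear combination, with coefficients independent of $n$, of products $s_{(n+i)}s_{\gamma}$ with $i\in\{0,\dots,l(\mu)\}$ and $|\gamma|=|\mu|-i$.

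Applying this decomposition to every factor of the product $s_{(n+\alpha_1,\mu_1)}\cdots s_{(n+\alpha_{k-1},\mu_{k-1})}s_{(n,\mu_k)}$ and distributing, we obtain a $\ZZ$-linear combination of sequences of the form
$$ s_{(n+\alpha_1+i_1)}\cdots s_{(n+\alpha_{k-1}+i_{k-1})}s_{(n+i_k)}\,s_{\beta}, $$
indexed by tuples $\vec{i}$ with $i_q\in\{0,\dots,l(\mu_q)\}$ and a partition $\beta$ of $\sum_q(|\mu_q|-i_q)$. By \ref{commute} (ii) the operator $\Delta^{\lambda_1}_{|\lambda_1|/k}\cdots\Delta^{\lambda_m}_{|\lambda_m|/k}$ is linear and therefore distributes over this expansion; each summand is now a sequence of exactly the homogeneous shape to which the hypothesis applies (one of the $k$ shifts playing the role of ``$s_{(n)}$''; if necessary, reparametrize $n\mapsto n+c$ to absorb the minimum of the shifts and permute the factors). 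The hypothesis then annihilates each summand for $n>n_0(\alpha_1+i_1,\dots,\alpha_k+i_k,|\beta|)$, setting $\alpha_k=0$, and taking the maximum over the finite collection of pairs $(\vec{i},\beta)$ produces the stated uniform bound.

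The main obstacle is the combinatorial bookkeeping around the Jacobi--Trudi expansion: one must verify that the indices in the first-row cofactor expansion run exactly over $i\in\{0,1,\dots,l(\mu)\}$ and that the resulting constant symmetric functions $M_i(\mu)$ genuinely lie in $\Lambda_{|\mu|-i}$. Once this is done, the rest of the argument is a routine application of linearity together with the hypothesis, and the combinatorial bound follows immediately by maximising over the finite index set.
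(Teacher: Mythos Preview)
Your proposal is correct and follows essentially the same route as the paper: both arguments set $\alpha_k=0$, apply the Jacobi--Trudi identity to each factor $s_{(n+\alpha_q,\mu_q)}$, expand the resulting determinants along the first row to produce a finite $\ZZ$-linear combination of terms $\big(\prod_q s_{(n+\alpha_q+i_q)}\big)\cdot(\text{constant symmetric function of degree }\sum_q(|\mu_q|-i_q))$, and then invoke linearity from \ref{commute}~(ii) together with the hypothesis. Your write-up is in fact slightly more explicit than the paper's in two places---you spell out that the cofactors $M_i(\mu)$ must be rewritten in the Schur basis to match the form $s_\beta$ in the hypothesis, and you note the harmless reparametrization needed so that one factor has shift exactly~$n$---but the underlying argument is the same.
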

 \begin{proof}
 Let $\alpha_k=0$.
 The Jacobi-Trudi identity yields
 $$ \prod_{q=1}^k s_{(n+\alpha_q,\mu_q)} $$
 $$ =\prod_{q=1}^k \det\begin{pmatrix}
 s_{(n+\alpha_q)} & s_{(n+\alpha_q+1)} & ... & s_{(n+\alpha_q+l(\mu_q))}\\
 s_{(\mu_{q,1}-1)} & s_{(\mu_{q,1})} & ... & s_{(\mu_{q,1}+l(\mu_q)-1)}\\
 ... & ... & ... & ...\\
 s_{(\mu_{q,l(\mu_q)}-l(\mu_q))} & ... & ... & s_{(\mu_{q,l(\mu_q)})}
 \end{pmatrix} $$
 $$ =\prod_{q=1}^k\left(\sum_{i=0}^{l(\mu_q)}s_{(n+\alpha_q+i)}(-1)^i \det(M_{\mu_q,i})\right)
  $$
 $$ = \sum_{i_1=0}^{l(\mu_1)}...\sum_{i_k=0}^{l(\mu_k)}
 \left(\prod_{q=1}^k s_{(n+\alpha_q+i_q)}\right)
 (-1)^{i_1+...+i_k}\prod_{q=1}^k\det(M_{\mu_q,i_q}) $$
 where for $\gamma\in \{\mu_1,...,\mu_k\}$ the matrix $M_{\gamma,i}$ is the matrix we get by deleting the $i$th column of
 $$\begin{pmatrix}
 s_{(\gamma_1-1)} & s_{(\gamma_1)} & ... & s_{(\gamma_1+l(\gamma)-1)}\\
 ... & ... & ... & ...\\
 s_{(\gamma_{l(\gamma)}-l(\gamma))} & ... & ... & s_{(\gamma_{l(\gamma)})}
 \end{pmatrix}. $$
The degree of $\prod_{q=1}^k\det(M_{\mu_q,i_q})$ is $|\mu_1|-i_1+...+|\mu_k|-i_k$. It follows from the assumption that this sequence
vanishes under $\Delta^{\lambda_1}_{n_1}...\Delta^{\lambda_m}_{n_m}$ for $n>\max\{n_0(\alpha_1+i_1,...,\alpha_k+i_k,|\mu_1|-i_1+...+|\mu_k|-i_k)~|~i_q\in\{0,...,l(\mu_q)\}~\text{for all $q\in \{1,...,k\}$}\}$.
 \end{proof}
 It follows that to prove \ref{main2} it is sufficient to prove the following
 \begin{Proposition}\label{main3}
Let $\alpha_1,\alpha_2,\alpha_3\in \NN$ and $\beta$ be a number partition. 
\begin{itemize}
\item[(a)]
The sequence $\{s_{(n+\alpha_1)}s_{(n)}s_{\beta}\}_n\in\Lambda_{\NN}^{\alpha_1+|\beta|,2}$ fulfills
$$ \Delta^{(2)}\Delta^{(1,1)}(s_{(n+\alpha_1)}s_{(n)}s_{\beta})=0~\text{for all $n>\beta_1+1$}.  $$
\item[(b)]
The sequence $\{s_{(n+\alpha_1)}s_{(n+\alpha_2)}s_{(n)}s_{\beta}\}_n\in\Lambda_{\NN}^{\alpha_1+\alpha_2+|\beta|,3}$ fulfills
$$ \Delta_2^{(3,3)}\Delta^{(3)}\Delta^{(2,1)}\Delta^{(1,1,1)}(s_{(n+\alpha_1)}s_{(n+\alpha_2)}s_{(n)}s_{\beta})=0 $$ 
$$\text{for all $n>\max\{4,\alpha_1-\alpha_2+\beta_1+2\}+\beta_1$}.  $$
\item[(c)]
The sequence $\{s_{(n+\alpha_1)}s_{(n+\alpha_2)}s_{(n+\alpha_3)}s_{(n)}s_{\beta}\}_n\in\Lambda_{\NN}^{\alpha_1+\alpha_2+\alpha_3+|\beta|,4}$ fulfills
$$ \Delta_3^{(4,4,4)}\Delta_2^{(3,3,2)}\Delta^{(4)}\Delta^{(3,1)}(\Delta^{(2,2)})^2(\Delta^{(2,1,1)})^2\Delta^{(1,1,1,1)}(s_{(n+\alpha_1)}
s_{(n+\alpha_2)}s_{(n+\alpha_3)}s_{(n)}s_{\beta})=0  $$
$$ \text{for sufficiently large $n$}. $$
\end{itemize}
\end{Proposition}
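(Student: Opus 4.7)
The plan is to expand each product of Schur functions via iterated Pieri and Littlewood--Richardson, to parametrize the summands by a bounded tail together with the few coordinates that grow with~$n$, and to verify the prescribed cancellation directly in those coordinates.

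For part~(a), the two-row Pieri identity gives $s_{(n+\alpha_1)}s_{(n)}=\sum_{j=0}^{n}s_{(2n+\alpha_1-j,j)}$, after which a further Pieri step produces the expansion of $f_n=s_{(n+\alpha_1)}s_{(n)}s_\beta$. As soon as $n>\beta_1+1$, every summand $s_\nu$ has $\nu_1,\nu_2$ growing with~$n$ and $\nu_i\le\beta_1$ for $i\ge 3$. The crucial step is to exhibit a bijection between the pairs of horizontal strips that produce $\nu$ (from $f_n$) and those that produce $\nu-(1,1)$ from $f_{n-1}$, valid whenever $\nu_2\ge 1$. This bijection forces $\Delta^{(1,1)}f_n$ to be supported on single-row Schur functions, and $\Delta^{(2)}$ then annihilates every such term via $\Delta^{(2)}s_{(N)}=s_{(N)}-s_{(N-2)+(2)}=0$. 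For minimality, omitting $\Delta^{(2)}$ leaves the single-row residue nonzero, while omitting $\Delta^{(1,1)}$ leaves $\Delta^{(2)}f_n$ as a genuinely two-row expansion; already the case $\alpha_1=0$, $\beta=\emptyset$ gives $\Delta^{(2)}f_n=s_{(n,n)}$, which is nonzero for every~$n$.

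For part~(b), the same recipe applied to $f_n=s_{(n+\alpha_1)}s_{(n+\alpha_2)}s_{(n)}s_\beta$ produces summands $s_\nu$ with three growing coordinates $\nu_1,\nu_2,\nu_3$ and a tail bounded by~$\beta_1$. I would show by iterated shift-bijections on triples of horizontal strips that the three order-one operators $\Delta^{(1,1,1)}$, $\Delta^{(2,1)}$, $\Delta^{(3)}$ successively reduce the expansion to a combination of two-row Schur functions of the shape required by the $(3,3)$-stable pattern, at which point $\Delta_2^{(3,3)}$ finishes via $\Delta_2^{(3,3)}s_{(N_1,N_2)}=s_{(N_1,N_2)}-s_{(N_1,N_2)}=0$. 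The threshold $n>\max\{4,\alpha_1-\alpha_2+\beta_1+2\}+\beta_1$ is precisely the range in which this chain of bijections becomes simultaneously exact, and \ref{commute}~(i) lets us apply the operators in whatever order simplifies the bookkeeping. Minimality would be checked one operator at a time on small explicit examples, typically with $\beta=\emptyset$ and small $\alpha_i$, each of which produces a concrete obstruction to the removal of the corresponding operator.

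Part~(c) is the main obstacle, and is the reason the analogous fourfold statement is only formulated as \ref{vierfach}. The strategy is the same, now with four growing coordinates $\nu_1,\nu_2,\nu_3,\nu_4$ and nine difference operators, but the chamber decomposition of the stable range has many more walls and the telescoping argument splits into a much larger number of subcases. The doubled occurrence of $\Delta^{(2,2)}$ and $\Delta^{(2,1,1)}$ reflects that the coefficient of $s_\nu$ in $f_n$, viewed as a function of $(n,\nu_2,\nu_3,\nu_4)$ with the tail frozen, exhibits degree-two behaviour along two specific shift directions, so those operators must be applied twice before the residual terms become amenable to the higher-step operators $\Delta_2^{(3,3,2)}$ and $\Delta_3^{(4,4,4)}$. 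I would carry out the reduction step by step via the five order-one operators, exploiting \ref{commute}~(i) to choose a convenient order of application and the symmetry between the four factors to collapse equivalent cases, but the remaining analysis is exactly the combinatorial explosion that the paper explicitly flags as too massive to complete.
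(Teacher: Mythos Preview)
Your argument for part~(a) contains a concrete error. You claim that the bijection ``forces $\Delta^{(1,1)}f_n$ to be supported on single-row Schur functions,'' but this is false as soon as $\beta\ne\emptyset$. Take $\alpha_1=0$, $\beta=(1)$, and $\nu=(2n-1,1,1)$. There is exactly one semistandard tableau of shape $\nu/\beta$ and content $(n,n)$ (columns force $(2,1)=1$, $(3,1)=2$, and the rest of row~1 is determined), so $s_\nu$ appears in $f_n$ with coefficient~1. But $\nu-(1,1)=(2n-2,0,1)$ is not a partition, so the coefficient of $s_\nu$ in $f_{n-1}+(1,1)$ is~0, and $s_{(2n-1,1,1)}$ survives in $\Delta^{(1,1)}f_n$. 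Your bijection breaks exactly when $\nu_2=\nu_3>0$, and these boundary terms do not disappear on their own; they happen to be killed by $\Delta^{(2)}$ afterwards, but your argument does not show this. The same boundary phenomenon recurs, with more walls, in parts~(b) and~(c), so the sketches there inherit the gap.

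The paper's route is structurally different and is designed to avoid exactly this difficulty. Rather than attacking $s_{(n+\alpha_1)}\cdots s_{(n)}s_\beta$ directly, it first eliminates $\beta$ in two steps. A skew tableau of shape $\nu/\beta$ and content $(n+\alpha_1,\dots,n)$ is split into its \emph{centre} (the first $\beta_1$ columns) and its \emph{arm} (the rest); summing over the finitely many possible centres~$c$ rewrites $s_{(n+\alpha_1)}\cdots s_{(n)}s_\beta$ as a finite sum of terms $(s_{(n+\alpha_1-c(1))}\cdots s_{(n-c(k))})_{\le l}+\mathrm{shape}(c)$. A separate lemma shows that if the full product of difference operators annihilates $f_n$, then the sub-product consisting only of those $\Delta^{\lambda_i}$ with $l(\lambda_i)\le l$ already annihilates $(f_n)_{\le l}$. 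Together these reduce Proposition~\ref{main3} to the case $\beta=\emptyset$, which is then proved by encoding the Schur expansion of $\prod_i s_{(n+\alpha_i)}$ as lattice points in an explicit polytope $P_{k,n,\alpha}$ of ``partial matrices'' and showing that each difference operator corresponds to restricting to a union of its faces, with dimension dropping by one at each step. The boundary terms that break your bijection are precisely what the centre/arm decomposition isolates and hands off to the truncation lemma.
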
 
For every symmetric function $f$ and $l\in \NN$ we write $f_{\le l}$ for the part of the Schur function decomposition of $f$ with partitions of length less than or equal to $l$ and $f_{>l}$ for the part of the Schur function decomposition with partitions of length greater than $l$.
\begin{Lemma}
Let $k,n_0,l\in \NN$ and $\{f_n\}_n\in \Lambda_{\NN}^{n_0,k}$. Let $\lambda_1,...,\lambda_m$ be number partitions of multiples of $k$. Suppose
$$ \Delta^{\lambda_1}_{|\lambda_1|/k}...\Delta^{\lambda_m}_{|\lambda_m|/k} (f_n)=0. $$
Then $$ \left(\prod_{i:l(\lambda_i)\le l} \Delta^{\lambda_i}_{|\lambda_i|/k}\right) (f_{n})_{\le l}=0. $$  
\end{Lemma}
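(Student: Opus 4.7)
The plan is to introduce the linear projection $P: \Lambda \to \Lambda$, $f \mapsto P(f) := f_{\le l}$, and to track how $P$ interacts with each individual operator $\Delta_{m_i}^{\lambda_i}$. First I would rewrite each difference operator, viewed as acting on sequences, as $\Delta_{m_i}^{\lambda_i} = S_{m_i} - A_{\lambda_i}$, where $S_{m_i}\{g_n\}_n = \{g_{n+m_i}\}_n$ is the shift operator and $A_{\lambda_i}\{g_n\}_n = \{g_n + \lambda_i\}_n$ adds $\lambda_i$ componentwise.

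The key combinatorial input is the elementary identity $l(\mu + \lambda) = \max(l(\mu), l(\lambda))$, which holds for any partitions $\mu$, $\lambda$. From this I would read off two commutation rules. If $l(\lambda_i) \le l$, then $A_{\lambda_i}$ preserves both the ``length $\le l$'' and ``length $> l$'' parts of $\Lambda$, so $P$ commutes with $A_{\lambda_i}$ and hence with $\Delta_{m_i}^{\lambda_i}$. If instead $l(\lambda_i) > l$, then $A_{\lambda_i}(g)$ is always supported on partitions of length $> l$, so $P \circ A_{\lambda_i} = 0$, giving $P \circ \Delta_{m_i}^{\lambda_i} = S_{m_i} \circ P$.

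Using \ref{commute}(i) I would then split the full product as $D = D_{\le}\, D_{>}$, where $D_{\le} := \prod_{i:\, l(\lambda_i) \le l} \Delta_{m_i}^{\lambda_i}$ and $D_{>} := \prod_{i:\, l(\lambda_i) > l} \Delta_{m_i}^{\lambda_i}$. Iterating the two rules above and setting $M := \sum_{i:\, l(\lambda_i) > l} m_i$ yields
\[
P \circ D \;=\; D_{\le} \circ P \circ D_{>} \;=\; D_{\le} \circ S_M \circ P \;=\; S_M \circ D_{\le} \circ P,
\]
where the last equality uses that $S_M$ commutes with each $S_{m_i}$ and each $A_{\lambda_i}$, hence with $D_{\le}$. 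Applying both sides to $\{f_n\}_n$ and using the hypothesis $D\{f_n\}_n = 0$ gives $S_M\bigl(D_{\le}\{(f_n)_{\le l}\}_n\bigr) = 0$, which after shifting indices is the claimed identity.

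The argument is essentially bookkeeping once the interplay between $P$, the shifts $S_m$, and the additions $A_\lambda$ is pinned down; I do not expect a real obstacle. The only conceptual step is the observation that $A_\lambda$ sends each Schur function $s_\mu$ to $s_{\mu+\lambda}$ with $l(\mu+\lambda) = \max(l(\mu), l(\lambda))$, which allows $P$ either to commute past $\Delta_{m_i}^{\lambda_i}$ or to annihilate its $A_{\lambda_i}$-component entirely.
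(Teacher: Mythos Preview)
Your proof is correct and takes essentially the same approach as the paper: both establish that the length-$\le l$ part of $Df_n$ equals $D_{\le}(f_n)_{\le l}$, using the key identity $l(\mu+\lambda)=\max(l(\mu),l(\lambda))$ to control how each $\Delta^{\lambda_i}_{m_i}$ interacts with the length filtration. Your operator-algebra packaging via the projection $P$ is a clean reformulation of the paper's explicit splitting $f_n=(f_n)_{\le l}+(f_n)_{>l}$ and its tracking of the ``length $>l$ garbage'' term $g_n$; the extra shift $S_M$ in your argument is just an artifact of working with sequence-level operators $\Delta=S-A$ rather than the paper's pointwise convention, and disappears once indices are matched up.
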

\begin{proof}
We have
$$ \Delta^{\lambda_1}_{|\lambda_1|/k}...\Delta^{\lambda_m}_{|\lambda_m|/k} f_n=\Delta^{\lambda_1}_{|\lambda_1|/k}...\Delta^{\lambda_m}_{|\lambda_m|/k}(f_n)_{\le l}+\Delta^{\lambda_1}_{|\lambda_1|/k}...\Delta^{\lambda_m}_{|\lambda_m|/k} (f_n)_{>l} $$
$$ = \left(\prod_{i:l(\lambda_i)\le l} \Delta^{\lambda_i}_{|\lambda_i|/k}\right)\left(\prod_{i:l(\lambda_i)> l} \Delta^{\lambda_i}_{|\lambda_i|/k}\right) (f_{n})_{\le l}+\Delta^{\lambda_1}_{|\lambda_1|/k}...\Delta^{\lambda_m}_{|\lambda_m|/k} (f_n)_{>l}.$$
We can write 
$$ \left(\prod_{i:l(\lambda_i)> l} \Delta^{\lambda_i}_{|\lambda_i|/k}\right) (f_{n})_{\le l}=(f_{n})_{\le l}+g_n. $$
for a function $g_n$ with only partitions of length greater than $l$ in its Schur function decomposition. It follows
$$ 0=\Delta^{\lambda_1}_{|\lambda_1|/k}...\Delta^{\lambda_m}_{|\lambda_m|/k} f_n=\left(\prod_{i:l(\lambda_i)\le l} \Delta^{\lambda_i}_{|\lambda_i|/k}\right)(f_{n})_{\le l}+\left(\prod_{i:l(\lambda_i)\le l} \Delta^{\lambda_i}_{|\lambda_i|/k}\right)g_n+\Delta^{\lambda_1}_{|\lambda_1|/k}...\Delta^{\lambda_m}_{|\lambda_m|/k} (f_n)_{>l}. $$
All partitions with length less than or equal to $l$ appear in $\left(\prod_{i:l(\lambda_i)\le l} \Delta^{\lambda_i}_{|\lambda_i|/k}\right)(f_{n})_{\le l}$ while all partitions with length greater than $l$ appear in $\left(\prod_{i:l(\lambda_i)\le l} \Delta^{\lambda_i}_{|\lambda_i|/k}\right)g_n+\Delta^{\lambda_1}_{|\lambda_1|/k}...\Delta^{\lambda_m}_{|\lambda_m|/k} (f_n)_{>l}$ and it follows that each of these two parts must itself be zero.
\end{proof}
Consider a semistandard skew tableau $T$ of shape $\nu/\beta$ and weight $(n+\alpha_1,n+\alpha_2,...,n+\alpha_k)$. We split $T$ into two parts: The part of the first $\beta_1$ columns which we call the centre of $T$ and denote it $\centre(T)$ and the rest which we call the arm of $T$ and denote it $\arm(T)$. For example, let
$$T=
\begin{tikzpicture}[scale=0.5, line width=1pt]
  \draw (0,0) grid (6,1);
  \draw (0,-1) grid (4,1);
  \draw (0,-2) grid (2,1);
  \draw (0,-3) grid (1,1);
  \node[left] (A) at (3,0.5) {1};
  \node[left] (A) at (4,0.5) {1};
  \node[left] (A) at (5,0.5) {1};
  \node[left] (A) at (6,0.5) {2};
  \node[left] (A) at (2,-0.5) {1};
  \node[left] (A) at (3,-0.5) {2};
  \node[left] (A) at (4,-0.5) {3};
  \node[left] (A) at (1,-1.5) {2};
  \node[left] (A) at (2,-1.5) {2};
  \node[left] (A) at (1,-2.5) {3};
\end{tikzpicture}.
$$ 
Then
$$
\centre(T)=
\begin{tikzpicture}[scale=0.5, line width=1pt]
  \draw (0,0) grid (2,1);
  \draw (0,-1) grid (2,1);
  \draw (0,-2) grid (2,1);
  \draw (0,-3) grid (1,1);
  \node[left] (A) at (2,-0.5) {1};
  \node[left] (A) at (1,-1.5) {2};
  \node[left] (A) at (2,-1.5) {2};
  \node[left] (A) at (1,-2.5) {3};
\end{tikzpicture}~\text{and}~\arm(T)=
\begin{tikzpicture}[scale=0.5, line width=1pt]
  \draw (0,0) grid (4,1);
  \draw (0,-1) grid (2,1);
  \node[left] (A) at (1,0.5) {1};
  \node[left] (A) at (2,0.5) {1};
  \node[left] (A) at (3,0.5) {1};
  \node[left] (A) at (4,0.5) {2};
  \node[left] (A) at (1,-0.5) {2};
  \node[left] (A) at (2,-0.5) {3};
\end{tikzpicture}.
$$
For fixed $\beta$ and $k$, there are only finitely many tableaux appearing as centres of tableaux of shape $\nu/\beta$ for arbitrary $\nu$ and we denote this finite set by $c_k(\beta)$. We denote the set of semistandard Young tableaux of weight $\gamma$ and arbitrary shape by $ST(\gamma)$ and the number of occurences of the Symbol $i$ in the tableau $c\in c_k(\beta)$ by $c(i)$
\begin{Lemma}
Let $\alpha_1,...,\alpha_k\in \NN$ and $\beta$ be a number partition. Then
$$ s_{(n+\alpha_1)}...s_{(n+\alpha_k)}s_{\beta}=\sum_{c\in c_k(\beta)}(s_{(n+\alpha_1-c(1))}...s_{(n+\alpha_k-c(k))})_{\le \max\{i~|~\shape(c)_i=\beta_1\}}+\shape(c) $$
for all $n\in \NN$.
\end{Lemma}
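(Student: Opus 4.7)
The natural approach is to interpret both sides combinatorially via iterated Pieri's rule and the centre--arm decomposition of semistandard Young tableaux.

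First, since $s_{(m)}=h_m$, iterated Pieri yields
\[
s_{(n+\alpha_1)}\cdots s_{(n+\alpha_k)}\,s_\beta \;=\; \sum_\nu N(\nu)\, s_\nu,
\]
where $N(\nu)$ is the number of semistandard skew tableaux $T$ of shape $\nu/\beta$ with weight $(n+\alpha_1,\ldots,n+\alpha_k)$. One recovers such a $T$ from a Pieri chain $\beta=\nu^{(0)}\subseteq\nu^{(1)}\subseteq\cdots\subseteq\nu^{(k)}=\nu$ by recording in each cell the index $i$ at which it first appears.

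Next, I would split each such $T$ along column $\beta_1$ into the centre $c=\centre(T)$ and the arm $A=\arm(T)$. By construction $c\in c_k(\beta)$ with $\shape(c)=(\min(\nu_i,\beta_1))_i$ and with $c(i)$ occurrences of the symbol $i$, while $A$ is a straight semistandard Young tableau of shape $\mu=(\max(\nu_i-\beta_1,0))_i$ and weight $(n+\alpha_i-c(i))_i$. The inequality $l(\mu)\le L(c):=\max\{i:\shape(c)_i=\beta_1\}$ is automatic, because $\mu_i>0$ forces $\nu_i>\beta_1$ and hence $\shape(c)_i=\beta_1$. Using the Kostka expansion $h_{m_1}\cdots h_{m_k}=\sum_\mu K_{\mu,(m_1,\ldots,m_k)}\,s_\mu$, the number of arms $A$ of shape $\mu$ with the prescribed weight is exactly the coefficient of $s_\mu$ in $s_{(n+\alpha_1-c(1))}\cdots s_{(n+\alpha_k-c(k))}$; componentwise addition of $\shape(c)$ to $\mu$ recovers $\nu$. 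Reorganising the sum over $T$ by centre thus produces
\[
\sum_{c\in c_k(\beta)}\bigl(s_{(n+\alpha_1-c(1))}\cdots s_{(n+\alpha_k-c(k))}\bigr)_{\le L(c)}+\shape(c),
\]
as required.

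The hard part will be the converse direction of the centre--arm correspondence: showing that every pair $(c,A)$ counted on the right truly arises from a valid semistandard tableau on $\nu/\beta$. Concretely, after gluing $c$ and $A$ along column $\beta_1$, one must check the row-weak inequality $c(i,\beta_1)\le A(i,1)$ in every row $i\le l(\mu)$ in which both have cells. The column-strict property of $A$ provides $A(i,1)\ge i$, so the real content is to bound the labels of $c$ in column $\beta_1$ by a compatible function of the row index, using the fact that $c$ is the centre of some legitimate SSYT. I would carry this out by a case analysis on $l(\mu)$ and on the shape of $\shape(c)$ at its rightmost column, and I expect this compatibility verification to be the delicate technical heart of the argument.
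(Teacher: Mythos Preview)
Your overall strategy coincides with the paper's: expand the product via iterated Pieri as a sum over semistandard skew tableaux of shape $\nu/\beta$ and content $(n+\alpha_1,\dots,n+\alpha_k)$, cut each such tableau at column $\beta_1$ into its centre and its arm, and identify the arm contribution with the truncated product of complete homogeneous symmetric functions. The paper's proof stops at exactly this point, simply asserting the rewriting without checking that \emph{every} pair $(c,A)$ with $c\in c_k(\beta)$ and $A$ a semistandard tableau of weight $(n+\alpha_i-c(i))_i$ with at most $L(c)$ rows glues back to a valid skew tableau on $\nu/\beta$. You are right to flag this converse direction as the real content.

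However, the case analysis you propose cannot succeed: the boundary condition $c(i,\beta_1)\le A(i,1)$ is \emph{not} automatic, so the asserted correspondence is not a bijection onto the set described. Take $k=3$, $\beta=(1)$, and let $c$ be the centre consisting of the single entry $3$ in cell $(2,1)$ (this centre does occur, e.g.\ for $\nu=(6,1)$); then $L(c)=2$ and $(c(1),c(2),c(3))=(0,0,1)$. The arm $A$ of shape $(4,1)$ with rows $(1,1,2,3)$ and $(2)$ is a semistandard tableau of weight $(2,2,1)$ with two rows, hence is counted by $(s_{(2)}s_{(2)}s_{(1)})_{\le 2}$; but gluing it to $c$ makes row $2$ read $3,2$, violating row-weakness. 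Numerically, for $n=2$ and $\alpha_1=\alpha_2=\alpha_3=0$ the coefficient of $s_{(5,2)}$ on the left-hand side is $K_{(5,2),(2,2,2,1)}=5$, whereas on the right-hand side each of the three centres of shape $(1,1)$ contributes $2s_{(5,2)}$ (from $2s_{(4,1)}+(1,1)$) and no other centre contributes to this Schur function, giving $6$. Thus the step you label ``the delicate technical heart'' does not merely require care---it fails; the same unacknowledged gap is present in the paper's own argument.
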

\begin{proof}
We have
$$ s_{(n+\alpha_1)}...s_{(n+\alpha_k)}s_{\beta}=\sum_{T'}s_{\shape(T')} $$
where the sum runs over all semistandard skew tableau $T'$ of shape $\nu/\beta$ for any $\nu$ and weight $(n+\alpha_1,n+\alpha_2,...,n+\alpha_k)$
We can rewrite this sum by splitting every such skew tableau into its centre and arm:
$$\sum_{T'}s_{\shape(T')}=\sum_{c\in c_k(\beta)}\sum_{T}s_{\shape(c)+\shape(T)}=\sum_{c\in c_k(\beta)}\left(\left(\sum_{T}s_{\shape(T)}\right)+\shape(c)\right) $$
where the sum runs over all $T\in ST((n+\alpha_1-c(1),...,n+\alpha_k-c(k)))$ such that $T$ has as most as many rows as $c$ has rows of length $\beta_1$. Note that $\sum_{T}s_{\shape(T)}$ is the part of $s_{(n+\alpha_1-c(1))}...s_{(n+\alpha_k-c(k))}$ with Schur functions with partitions with at most as many rows as $c$ has rows of length $\beta_1$. 
\end{proof}
The following lemma follows from the previous two lemmas.
\begin{Lemma}
Let $k\in\NN$ and $\beta$ be a number partition. Let $\lambda_1,...,\lambda_m$ be number partitions of multiples of $k$.
If for all numbers $\alpha_1,...,\alpha_k\in \NN$ there is a number $N(\alpha_1,...,\alpha_k)$ such that the sequence $\{s_{(n+\alpha_1)}...s_{(n+\alpha_k)}\}\in\Lambda_{\NN}^{\alpha_1+...+\alpha_k,k}$ fulfills
$$ \Delta^{\lambda_1}_{n_1}\cdots\Delta^{\lambda_m}_{n_m}(s_{(n+\alpha_1)}\cdots s_{(n+\alpha_k)})=0~\text{for all $n>N(\alpha_1,...,\alpha_k)$} $$
then the sequence $\{s_{(n+\alpha_1)}...s_{(n+\alpha_k)}s_{\beta}\}\in\Lambda_{\NN}^{\alpha_1+...+\alpha_k+|\beta|,k}$ fulfills
$$ \Delta^{\lambda_1}_{n_1}\cdots\Delta^{\lambda_m}_{n_m}(s_{(n+\alpha_1)}\cdots s_{(n+\alpha_k)}s_{\beta})=0$$ $$
\text{for all $n>\max\{N(\alpha_1-c(1),...,\alpha_k-c(k))~|~c\in c_k(\beta)\}+\beta_1$}. $$
\end{Lemma}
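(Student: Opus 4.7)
My plan is to chain the decomposition Lemma immediately above with the length-restriction Lemma before it. First, apply the decomposition to expand
$$ s_{(n+\alpha_1)}\cdots s_{(n+\alpha_k)} s_\beta = \sum_{c \in c_k(\beta)} \bigl[ \bigl(s_{(n+\alpha_1-c(1))}\cdots s_{(n+\alpha_k-c(k))}\bigr)_{\le l_c} + \shape(c)\bigr], $$
where $l_c := \max\{i : \shape(c)_i = \beta_1\}$, and recall that $c_k(\beta)$ is finite. By linearity of each $\Delta^{\lambda_i}_{n_i}$ (\ref{commute} (ii)), it is enough to show vanishing of $\Delta^{\lambda_1}_{n_1}\cdots \Delta^{\lambda_m}_{n_m}$ applied to each summand separately.

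Next, I would verify that every operator $\Delta^{\mu}_l$ commutes with the shift by a fixed partition: for any sequence $\{g_n\}_n$ and any partition $\nu$,
$$\Delta^{\mu}_l(g_n + \nu) = (g_n + \nu) - (g_{n-l} + \nu + \mu) = (g_n - (g_{n-l} + \mu)) + \nu = (\Delta^\mu_l g_n) + \nu,$$
using that componentwise partition addition is commutative. Iterating, the full operator product commutes with the shift $+\shape(c)$. So for each $c \in c_k(\beta)$ it suffices to prove
$$\Delta^{\lambda_1}_{n_1}\cdots \Delta^{\lambda_m}_{n_m} \bigl(s_{(n+\alpha_1-c(1))}\cdots s_{(n+\alpha_k-c(k))}\bigr)_{\le l_c} = 0$$
for all sufficiently large $n$.

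Finally, I would apply the hypothesis to the un-truncated product $s_{(n+\alpha_1-c(1))}\cdots s_{(n+\alpha_k-c(k))}$: after re-indexing to handle arguments that may be negative (the needed shift is at most $\beta_1$, since each $c(i)$ is bounded by the number of columns of the centre, i.e.\ by $\beta_1$, by the column-strict condition), the operator product kills this product for $n > N(\alpha_1-c(1),\ldots,\alpha_k-c(k))$ up to that shift. The length-restriction Lemma with $l = l_c$ then gives
$$\Bigl(\prod_{i:\, l(\lambda_i) \le l_c} \Delta^{\lambda_i}_{n_i}\Bigr) \bigl(s_{(n+\alpha_1-c(1))}\cdots s_{(n+\alpha_k-c(k))}\bigr)_{\le l_c} = 0;$$
and, using commutativity of the $\Delta$'s (\ref{commute} (i)), applying the remaining operators with $l(\lambda_i) > l_c$ to this zero still yields zero. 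Maximizing over the finite set $c_k(\beta)$ and adding the $\beta_1$ re-indexing cushion produces the stated bound. The one genuinely subtle ingredient is the interaction between the truncation $(\cdot)_{\le l_c}$ and the difference operators, and that is exactly what the length-restriction Lemma takes care of; everything else in this proof is bookkeeping that combines the two preceding lemmas.
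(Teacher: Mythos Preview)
Your proposal is correct and follows exactly the route the paper intends: the paper's own proof is the single sentence ``the following lemma follows from the previous two lemmas,'' and you have spelled out precisely that chaining---decomposition lemma, linearity, commutation of $\Delta^\mu_l$ with the fixed shift $+\shape(c)$, then the length-restriction lemma applied to each centre $c$---together with the bookkeeping for the $\beta_1$ cushion. There is nothing to add.
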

This lemma shows that it is sufficient to prove the following proposition.
\begin{Proposition}\label{main4}
Let $\alpha_1\ge \alpha_2\ge 0$. 
\begin{itemize}
\item[(a)]
The sequence $\{s_{(n+\alpha_1)}s_{(n)}\}_n\in \Lambda_{\NN}^{\alpha_1,2}$ fulfills
$$ \Delta^{(2)}\Delta^{(1,1)}s_{(n+\alpha_1)}s_{(n)}=0~\text{for all $n>1$}.  $$
The set $\{\Delta^{(2)},\Delta^{(1,1)}\}$ is minimal in the sense that the above sequence is not eventually zero if we remove one of the difference operators.
\item[(b)]
The sequence $\{s_{(n+\alpha_1)}s_{(n+\alpha_2)}s_{(n)}\}_n\in \Lambda_{\NN}^{\alpha_1+\alpha_2,3}$ fulfills
$$ \Delta_2^{(3,3)}\Delta^{(3)}\Delta^{(2,1)}\Delta^{(1,1,1)}s_{(n+\alpha_1)}s_{(n+\alpha_2)}s_{(n)}=0 $$  $$\text{for all $n>\max\{4,\alpha_1-\alpha_2+2\}$}.  $$
The set $\{\Delta_2^{(3,3)},\Delta^{(3)},\Delta^{(2,1)},\Delta^{(1,1,1)}\}$ is minimal in the sense that the above sequence is not eventually zero if we remove one of the difference operators.
\end{itemize}
\end{Proposition}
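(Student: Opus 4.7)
Part (a) is a one-step computation. The Pieri rule gives
$$s_{(n+\alpha_1)}\,s_{(n)}=\sum_{i=0}^{n} s_{(2n+\alpha_1-i,\,i)},$$
and the $(1,1)$-shift matches the $n$-shift: in $\Delta^{(1,1)}g_n=g_n-(g_{n-1}+(1,1))$ every term with $i\ge 1$ cancels against its predecessor, leaving only $s_{(2n+\alpha_1)}$ for $n\ge 1$. Since $s_{(2(n-1)+\alpha_1)}+(2)=s_{(2n+\alpha_1)}$, the operator $\Delta^{(2)}$ then annihilates this one-row sequence for $n\ge 2$, which matches the bound $n>1$. Minimality is immediate: $\Delta^{(1,1)}g_n=s_{(2n+\alpha_1)}$ and $\Delta^{(2)}g_n=s_{(n+\alpha_1,\,n)}$ are both nonvanishing.

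For part (b), the strategy is to apply $\Delta^{(1,1,1)}$ first so that the length-$3$ contribution vanishes structurally, and then reduce the remaining problem to a length-$\le 2$ computation. Writing
$$f_n=s_{(n+\alpha_1)}s_{(n+\alpha_2)}s_{(n)}=\sum_\mu K_{\mu,(n+\alpha_1,n+\alpha_2,n)}\,s_\mu,$$
an SSYT count shows that for length-$3$ shapes $(\mu_1,\mu_2,\mu_3)$ with $\mu_3\ge 1$ the Kostka number equals the number of values of $b_2$ (the count of $2$'s in the second row) in the interval
$$[\max(\mu_3,\,\mu_2+\mu_3-n,\,0),\ \min(\mu_2,\,n+\alpha_2,\,2n+\alpha_1+\alpha_2-\mu_2)].$$
Each non-constant threshold shifts by $+1$ under $(\mu,n)\mapsto(\mu+(1,1,1),n+1)$, and the constant lower bound $0$ is inactive as soon as $\mu_3\ge 1$; hence the interval length, and so the Kostka number, is invariant under this shift. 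It follows that $\Delta^{(1,1,1)}(f_n)_3=0$ identically in the stable range, and consequently $\Delta^{(1,1,1)}f_n=(f_n)_{\le 2}-((f_{n-1})_{\le 2}+(1,1,1))$.

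Each of the remaining operators commutes with the $+(1,1,1)$-shift by linearity, so if $P:=\Delta_2^{(3,3)}\Delta^{(3)}\Delta^{(2,1)}$ annihilates the two-row sequence $u_n:=(f_n)_{\le 2}$ for $n>N$, then $P\Delta^{(1,1,1)}f_n$ also vanishes for $n>N$. The task therefore reduces to showing $Pu_n=0$. A direct Pieri expansion gives $u_n=\sum_j c_j(n)\,s_{(3n+\alpha_1+\alpha_2-j,\,j)}$, where $c_j(n)$ is piecewise linear in $j$ with breakpoints at $j=n$, $j=n+\alpha_2$, $j=n+\lfloor(\alpha_1+\alpha_2)/2\rfloor$ and linear pieces of the form $aj+bn+d$ with small integer coefficients. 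A short computation shows that $\Delta^{(3)}\Delta^{(2,1)}$ annihilates any such linear piece in its interior (both operators are first differences, so their composition kills any degree-$1$ polynomial in $(j,n)$); the residual, supported only at the breakpoints between consecutive pieces, is then killed by $\Delta_2^{(3,3)}$, whose double shift $(j,n)\mapsto(j+3,n+2)$ is calibrated exactly to the spacing between these breakpoints.

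The main obstacle is this boundary bookkeeping, where up to three breakpoints can interact and the double shift in $\Delta_2^{(3,3)}$ introduces parity-type corrections coming from the floor function in $n+\lfloor(\alpha_1+\alpha_2)/2\rfloor$. The bound $n>\max\{4,\alpha_1-\alpha_2+2\}$ is exactly the threshold at which all pieces are simultaneously in the generic position, so that no short-range boundary case intrudes on the telescoping. Minimality is verified by specialising to $\alpha_1=\alpha_2=0$ and exhibiting, for each of the four operators, a nonvanishing leading Schur term in the sequence obtained by removing that operator; these witnesses can be read off from the piecewise-linear structure of $c_j(n)$ and the corresponding Kostka-number formulas.
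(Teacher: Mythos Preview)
Part (a) is fine; your direct Pieri argument is correct and is exactly what the paper's polytope formalism specialises to for $k=2$.

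Part (b) has the right architecture but two concrete errors.

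First, your Kostka-invariance computation actually proves more than you claim. You show that for any $\mu$ with $\mu_3\ge 1$ the coefficient of $s_\mu$ in $f_n$ equals the coefficient of $s_{\mu-(1,1,1)}$ in $f_{n-1}$. Since $f_{n-1}+(1,1,1)$ lands \emph{entirely} in length-$3$ shapes (any $\nu$ with $l(\nu)\le 2$ is sent to a shape with third part $1$), this says the length-$3$ part of $f_n$ equals \emph{all} of $f_{n-1}+(1,1,1)$. Hence $\Delta^{(1,1,1)}f_n=(f_n)_{\le 2}$ outright, not $(f_n)_{\le 2}-((f_{n-1})_{\le 2}+(1,1,1))$ as you write. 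This is exactly the paper's Lemma on $\Delta^{(1^k)}$, proved there via the injection $a_{i1}\mapsto a_{i1}+1$.

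Second, your breakpoint list for $c_j(n)$ is wrong: the pieces change at $j=n$, $j=n+\alpha_2$, $j=n+\alpha_1$, and at the diagonal boundary $j=\lfloor(3n+|\alpha|)/2\rfloor$, not at $j=n+\lfloor(\alpha_1+\alpha_2)/2\rfloor$. More importantly, your ``$\Delta^{(3)}\Delta^{(2,1)}$ kills linear pieces, $\Delta_2^{(3,3)}$ cleans up boundaries'' story is not what actually happens. The first three breakpoints lie along lines $j-n=\mathrm{const}$, and $\Delta^{(2,1)}$ differences in exactly that direction, so $\Delta^{(2,1)}u_n$ already has constant coefficient $1$ over the whole range $0\le j\le \lfloor(3n+|\alpha|)/2\rfloor$; then $\Delta^{(3)}$ (a difference in $n$ alone) leaves a residue supported only at the moving top endpoint, and it is this single diagonal boundary, moving along $2j-3n=\mathrm{const}$, that $\Delta_2^{(3,3)}$ is calibrated to. The paper carries out precisely this computation (phrased as injections between sets $Q_{n,21}$, $Q_{n,13}$ of partial matrices) and obtains the explicit one- or two-term residue which $\Delta_2^{(3,3)}$ then visibly kills; your sketch does not reach that point, and with the stated breakpoints it could not. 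The minimality argument likewise needs explicit witnesses rather than an appeal to ``leading terms''; the paper checks each of the four omissions separately, including a nontrivial multiplicity count for the case where $\Delta^{(2,1)}$ is removed.
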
 
We do not provide a complete proof of the following statement about fourfold products but show how our methods point to its validity until reaching a point where the number of cases to discuss is massive. 
\begin{Conjecture}\label{vierfachhom}
Let $\alpha_1\ge \alpha_2\ge \alpha_3\ge 0$. 
The sequence $\{s_{(n+\alpha_1)}s_{(n+\alpha_2)}s_{(n+\alpha_3)}s_{(n)}\}_n\in \Lambda_{\NN}^{\alpha_1+\alpha_2+\alpha_3,4}$ fulfills
$$ \Delta_3^{(4,4,4)}\Delta_2^{(3,3,2)}\Delta^{(4)}\Delta^{(3,1)}(\Delta^{(2,2)})^2(\Delta^{(2,1,1)})^2\Delta^{(1,1,1,1)}s_{(n+\alpha_1)}
s_{(n+\alpha_2)}s_{(n+\alpha_3)}s_{(n)}=0  $$
$$ \text{for sufficiently large $n$.} $$
 The multiset $\{\Delta_3^{(4,4,4)},\Delta_2^{(3,3,2)},\Delta^{(4)},\Delta^{(3,1)},\Delta^{(2,2)},\Delta^{(2,2)},\Delta^{(2,1,1)},\Delta^{(2,1,1)},\Delta^{(1,1,1,1)}\}$ is minimal in the sense that the above sequence is not eventually zero if we remove one of the difference operators.
\end{Conjecture}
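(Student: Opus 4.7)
The strategy is to follow the blueprint of the proof of \ref{main4}(b), adapted to four factors, with the unlabeled length-decomposition lemma immediately preceding \ref{main4} as the organizing tool. Grouping the operators by the length of their indexing partitions gives: length $\le 1$ contains only $\Delta^{(4)}$; length $\le 2$ adds $\Delta^{(3,1)}$ and $(\Delta^{(2,2)})^2$; length $\le 3$ adds $\Delta_3^{(4,4,4)}$, $\Delta_2^{(3,3,2)}$ and $(\Delta^{(2,1,1)})^2$; length $\le 4$ adds $\Delta^{(1,1,1,1)}$. It therefore suffices to prove that for each $\ell\in\{1,2,3,4\}$, the subproduct consisting of the operators of length $\le \ell$ (with the multiplicities listed in the conjecture) annihilates the length-$\ell$ part of $s_{(n+\alpha_1)}s_{(n+\alpha_2)}s_{(n+\alpha_3)}s_{(n)}$ for all sufficiently large $n$.

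The first step is to expand the product by iterated Pieri as $\sum_\nu K_{\nu,\mu(n)}\, s_\nu$ with $\mu(n)=(n+\alpha_1,n+\alpha_2,n+\alpha_3,n)$ and $l(\nu)\le 4$. For $\ell=1$ only $\nu=(4n+\alpha_1+\alpha_2+\alpha_3)$ occurs and $\Delta^{(4)}$ kills it because the unique row grows by $4$ between $n$ and $n+1$. For $\ell=2$ the Kostka numbers for two-row shapes admit a closed form, and a telescoping argument modeled on \ref{main4}(a) shows that $\Delta^{(4)}\Delta^{(3,1)}(\Delta^{(2,2)})^2$ annihilates the length-$2$ contribution; the need for two copies of $\Delta^{(2,2)}$ reflects the fact that along the ``diagonal'' family where $\nu_2\sim 2n$, the Kostka numbers are polynomials in $n$ of degree two, so the corresponding operator must be applied twice. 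The case $\ell=3$ is structurally the proof of \ref{main4}(b) augmented by one additional Pieri factor; the higher-length operators $\Delta_3^{(4,4,4)}$, $\Delta_2^{(3,3,2)}$ and the duplicated $\Delta^{(2,1,1)}$ are dictated by the extra linear and quadratic $n$-dependences this new factor introduces into the length-$3$ multiplicities.

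The core of the proof, and the main obstacle, is the length-$4$ part. Here $\nu=(\nu_1,\nu_2,\nu_3,\nu_4)$ with $\nu_4\ge 1$; I would parameterize by the gap vector $(\nu_4,\nu_3-\nu_4,\nu_2-\nu_3,\nu_1-\nu_2)$ and expand the Kostka number via the Jacobi-Trudi determinant along the bottom row. The plan is to show that $\Delta^{(1,1,1,1)}$ annihilates the ``generic'' contribution in which all four row gaps grow unboundedly with $n$, while the length-$3$ and length-$2$ operators successively clean up the boundary strata in which some $\nu_i-\nu_{i+1}$ stays bounded as $n\to\infty$. This requires a case split on the ordering of the $\alpha_i$'s and on the patterns of equalities among the $\nu_i$'s; each individual subcase reduces to the same sort of telescoping cancellation already seen in the proof of \ref{main4}(b), but the number of subcases grows so rapidly that a uniform closed-form argument appears out of reach by hand. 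This combinatorial explosion is precisely why the statement is left as a conjecture; it should nevertheless be verifiable in each bounded range of parameters by direct symmetric-function computation.

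Minimality would be handled by exhibiting, for each of the nine operators $D$ in the multiset, a specific triple $(\alpha_1,\alpha_2,\alpha_3)$ and a partition $\nu$ for which the coefficient of $s_\nu$ in the image of $s_{(n+\alpha_1)}s_{(n+\alpha_2)}s_{(n+\alpha_3)}s_{(n)}$ under the product of the remaining eight operators is nonzero for arbitrarily large $n$. Such examples can be located by machine search in each of the relevant length classes; the most delicate are the repeated operators $(\Delta^{(2,2)})^2$ and $(\Delta^{(2,1,1)})^2$, where one must exhibit a length-$2$ (respectively length-$3$) family of $\nu$'s whose multiplicities retain a nonzero polynomial-in-$n$ residue after one application of the operator, certifying that a second application really is necessary.
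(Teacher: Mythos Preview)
Your organizing principle differs from the paper's partial argument. The paper does not stratify by output length; it continues the polytope method of \ref{delta1hochk} and \ref{delta1delta21} directly at $k=4$. The product is expressed as $\sum_{A\in P_{4,n,\alpha}^{\ZZ}}s_{(|a_1|,\dots,|a_4|)}$, and each operator is applied via an explicit injection from the integer points at level $n-1$ into those at level $n$; the complement of the image is a union of faces of strictly smaller dimension. The paper carries this through $\Delta^{(1^4)}$, then $\Delta^{(2,1,1)}$ twice, then $\Delta^{(2,2)}$, exhibiting a chain of face-unions of dimensions $6>5>4>3>2$, and stops because the union has by then split into enough components that the next injection becomes unmanageable by hand.

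Your length-by-length reduction is a reasonable alternative, but two points need care. First, the lemma you invoke is stated only in the direction ``full product annihilates $f_n$ $\Rightarrow$ length-$\le\ell$ subproduct annihilates $(f_n)_{\le\ell}$''; you need the converse, which does hold (length-$\le\ell$ operators preserve length-$\le\ell$ Schur expansions, so one inducts on $\ell$), but that is not the content of the lemma and must be argued separately. Second, your $\ell=3$ step is not structurally the proof of \ref{main4}(b): there the four operators $\Delta_2^{(3,3)},\Delta^{(3)},\Delta^{(2,1)},\Delta^{(1^3)}$ act on a \emph{threefold} product, whereas here eight different operators $\Delta^{(4)},\Delta^{(3,1)},(\Delta^{(2,2)})^2,\Delta_3^{(4,4,4)},\Delta_2^{(3,3,2)},(\Delta^{(2,1,1)})^2$ must annihilate the length-$3$ part of a \emph{fourfold} product. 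This is a different and larger computation, so your route does not sidestep the case explosion; it repackages it. The paper's polytope approach at least yields a concrete dimension count at each stage, which your Kostka and Jacobi--Trudi sketch does not.
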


\section{Proof of \ref{main4}}
In the whole section $n$ and $\alpha_1\ge\alpha_2 \ge \alpha_3$ are natural numbers.
In the next lemmas, we use partial matrices of the form $$\begin{pmatrix}
 a_{11} & a_{12} & a_{13} & \dots & a_{1k}\\
 a_{21} &  a_{22} & \dots & a_{2,(k-1)}\\
 \vdots\\
 a_{(k-1),1} & a_{(k-1),2}\\
 a_{k,1}  
\end{pmatrix}.$$
For every $i\in \{1,...,k-1\}$, we write $a_i$ for the $i$th row of $a$ and $|a_i|$ for the sum of the entries of the $i$th row. Let $k,n\in\NN$ and $\alpha=(\alpha_1,...,\alpha_k)\in \NN_0^k$ with $\alpha_1\ge...\ge \alpha_k$. Let $P_{k,n,\alpha}$ be the set of all partial matrices
$  \begin{pmatrix}
 a_{11} & a_{12} & a_{13} & \dots & a_{1k}\\
 a_{21} &  a_{22} & \dots & a_{2,(k-1)}\\
 \vdots\\
 a_{(k-1),1} & a_{(k-1),2}\\
 a_{k,1}  
\end{pmatrix}$ with real entries and 
$$ 0\le a_{ij}\le n+\alpha_{i+j-1}~\text{for all $1\le i\le k$ and $1\le j\le k-i+1$}, $$
  $$ \sum_{j=1}^m a_{ij}\le \sum_{j=1}^m a_{(i-1)j},~\text{for all $2\le i\le k$ and $1\le m\le k-i+1$}, $$
  $$ \sum_{i=1}^{m}a_{i,(m+1-i)}= n+\alpha_m ~\text{for all $1\le m\le k$}. $$
  or written as vector inequalities and equalities:
$$ \begin{pmatrix}
-1 & 1 \\
-1 & 1 & -1 & 1 \\
\vdots & & & \ddots & \ddots\\
-1 & 1 & \dots & \dots & -1 & 1 
\end{pmatrix}
\begin{pmatrix}
a_{(i-1),1}\\
a_{i,1}\\
a_{(i-1),2}\\
a_{i,2}\\
\vdots\\
a_{(i-1),(k-i+1)}\\
a_{i,(k-i+1)}
\end{pmatrix}\le
\begin{pmatrix}
0\\
\vdots\\
0
\end{pmatrix},~\text{for all $2\le i\le k$}, $$
$$ \begin{pmatrix}
1\\
 & 1 & 1 \\
 &  &  & 1 & 1 & 1 \\
 &  & & &  & & \ddots\\
 &  &  &  & & & 1 & 1 & \dots & 1
\end{pmatrix}
\begin{pmatrix}
a_{11}\\
a_{12}\\
a_{21}\\
a_{13}\\
a_{22}\\
a_{31}\\
\vdots\\
a_{1k}\\
\vdots\\
a_{(k-1),2}\\
a_{k,1}
\end{pmatrix}=
\begin{pmatrix}
n+\alpha_1\\
\vdots\\
n+\alpha_k
\end{pmatrix}. $$
$P_{k,n,\alpha}$ is a convex polytope in $\RR^{\binom{k+1}{2}}$. If $M$ is a set of partial matrices we write $M^{\ZZ}$ for its subset of partial matrices with only integer valued entries.
\begin{Proposition}\label{polytope}
Let $k,n\in\NN$ and $\alpha=(\alpha_1,...,\alpha_k)\in \NN_0^k$ with $\alpha_1\ge...\ge \alpha_k$. The polytope $P_{k,n,\alpha}$ is $\binom k2$-dimensional.
\end{Proposition}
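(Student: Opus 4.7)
The plan is to compute the dimension of $P_{k,n,\alpha}$ by the standard two-step method for polytopes cut out by equalities and inequalities: first determine the codimension of the affine hull, then exhibit a relative interior point to show the polytope is full-dimensional inside that hull.

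First I would observe that the partial matrices live in $\RR^{\binom{k+1}{2}}$ and that the definition of $P_{k,n,\alpha}$ imposes exactly $k$ equality constraints, one for each antidiagonal. Since the $m$-th equation only involves the $m$ variables on the $m$-th antidiagonal and the variable sets of different antidiagonals are disjoint, the $k$ equations are linearly independent. Hence the affine hull of $P_{k,n,\alpha}$ has dimension exactly $\binom{k+1}{2} - k = \binom{k}{2}$, which gives the required upper bound.

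To show that the bound is attained I would exhibit a relative interior point, namely the uniformly balanced partial matrix
\[ a^*_{ij} = \frac{n + \alpha_{i+j-1}}{i+j-1}, \qquad 1 \le i \le k,\ 1 \le j \le k-i+1, \]
which distributes each antidiagonal sum $n + \alpha_m$ evenly among its $m$ entries and so satisfies all $k$ antidiagonal equalities by construction. The non-negativity and strict upper bounds $0 < a^*_{ij} < n+\alpha_{i+j-1}$ hold whenever $i+j-1 \ge 2$; the single tight case $a^*_{11} = n+\alpha_1$ is forced by the antidiagonal equation for $m=1$ and therefore does not reduce the dimension.

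The main obstacle is the strict verification of the partial-row-sum inequalities $\sum_{j=1}^m a^*_{ij} < \sum_{j=1}^m a^*_{(i-1)j}$ for $2 \le i \le k$ and $1 \le m \le k-i+1$. I would reindex both sides as $\sum_{l=i}^{i+m-1} (n+\alpha_l)/l$ and $\sum_{l=i-1}^{i+m-2} (n+\alpha_l)/l$ respectively, so that their difference telescopes to $(n+\alpha_{i-1})/(i-1) - (n+\alpha_{i+m-1})/(i+m-1)$. This quantity is strictly positive because the weak monotonicity $\alpha_{i-1} \ge \alpha_{i+m-1}$ combined with the strict inequality $i-1 < i+m-1$ forces $(n+\alpha_{i-1})/(i-1) > (n+\alpha_{i+m-1})/(i+m-1)$, provided $n+\alpha_1 \ge 1$, a mild non-degeneracy condition that rules out the trivially collapsed polytope corresponding to $n = 0$ and $\alpha = 0$. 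Once these strict inequalities are confirmed, $a^*$ is a relative interior point of $P_{k,n,\alpha}$, so its dimension equals $\binom{k}{2}$.
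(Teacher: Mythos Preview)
Your proof is correct and takes a genuinely different route from the paper's. After the common upper-bound step (the $k$ antidiagonal equalities involve disjoint sets of variables and are therefore independent), the paper establishes the lower bound by explicitly listing $\binom{k}{2}+1$ affinely independent points of $P_{k,n,\alpha}$, whereas you exhibit the single balanced point $a^*_{ij}=(n+\alpha_{i+j-1})/(i+j-1)$ and verify that every non-redundant defining inequality is strict there, so that $a^*$ lies in the relative interior of $P_{k,n,\alpha}$ within the $\binom{k}{2}$-dimensional affine solution space. Your argument is shorter and cleaner for this proposition in isolation. The paper's longer construction, however, pays off downstream: the explicit list of affinely independent points is invoked again in the two subsequent propositions to show that $\{A:a_{k,1}=0\}$ is a facet of $P_{k,n,\alpha}$ and that $\{A:a_{1,k}=0\}$ and $\{A:a_{k-1,1}=0\}$ are facets of that facet, simply by counting how many of the listed points lie in each face. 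Your interior-point method yields no such vertex data, so if you adopted it here you would need fresh arguments for those later facet claims.

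One small correction: the non-degeneracy hypothesis you actually need is $n+\alpha_k\ge 1$ (equivalently $n\ge 1$, since $\alpha_k\ge 0$), not merely $n+\alpha_1\ge 1$. Strict positivity of $a^*_{ij}$ on the last antidiagonal requires $n+\alpha_k>0$, and the telescoped difference $(n+\alpha_{i-1})/(i-1)-(n+\alpha_{i+m-1})/(i+m-1)$ with $i=k$ requires $n+\alpha_{k-1}>0$; both can fail when $n=0$ and $\alpha_k=0$ even if $\alpha_1>0$.
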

\begin{proof}
$P_{k,n,\alpha}$ is contained in the affine subspace defined by 
$$ \begin{pmatrix}
1\\
 & 1 & 1 \\
 &  &  & 1 & 1 & 1 \\
 &  & & &  & & \ddots\\
 &  &  &  & & & 1 & 1 & \dots & 1
\end{pmatrix}
\begin{pmatrix}
a_{11}\\
a_{12}\\
a_{21}\\
a_{13}\\
a_{22}\\
a_{31}\\
\vdots\\
a_{1k}\\
\vdots\\
a_{(k-1),2}\\
a_{k,1}
\end{pmatrix}=
\begin{pmatrix}
n+\alpha_1\\
\vdots\\
n+\alpha_k
\end{pmatrix}$$
The above matrix has rank $k$. This implies that the affine subspace has dimension $\binom{k+1}{2}-k=\binom{k}{2}$.
Otherwise, $P_{k,n,\alpha}$ contains the following $\binom k2+1$ affine independent points. For every $i\in \{0,...,k-1\}$ we construct $\min\{1,i\}$ points starting with 
$$ \begin{pmatrix}
 n+\alpha_1 & 0 & 0 & \dots & 0 & n+\alpha_{k-i+1} & \dots & n+\alpha_k\\
 n+\alpha_2 &  0 & \dots & \dots & \dots & \dots & 0\\
 \vdots&\vdots \\
 n+\alpha_{k-i} & 0\\
 0&\vdots\\
 \vdots \\
 0  
\end{pmatrix}. $$
We get the next point by moving $n+\alpha_k$ diagonally left and down:
$$ \begin{pmatrix}
 n+\alpha_1 & 0 & 0 & \dots & 0 & n+\alpha_{k-i+1} & \dots & n+\alpha_{k-1} & 0\\
 n+\alpha_2 &  0 & \dots & 0 &0 &\dots &0 &  n+\alpha_k\\
 \vdots&\vdots\\
 n+\alpha_{k-i} & 0\\
 0&\vdots\\
 \vdots\\
 0  
\end{pmatrix}. $$
Now, we move $n+\alpha_{k-1}$ and $n+\alpha_{k}$ diagonally left and down:
$$ \begin{pmatrix}
 n+\alpha_1 & 0 & 0 & \dots & 0 & n+\alpha_{k-i+1} & \dots & n+\alpha_{k-2} & 0& 0\\
 n+\alpha_2 &  0 & \dots & 0 &0 & \dots &0 &  n+\alpha_{k-1} & 0\\
 n+\alpha_3 & 0 & \dots &0&0&\dots&0& n+\alpha_k\\
 \vdots&\vdots\\
 n+\alpha_{k-i} & 0\\
 0&\vdots\\
 \vdots\\
 0  
\end{pmatrix}. $$
We go on moving the rightmost nonzero values diagonally left and down until getting the point
$$ \begin{pmatrix}
 n+\alpha_1 & 0 & 0 & \dots & 0 & n+\alpha_{k-i+1} & \dots & 0 & 0& 0\\
 n+\alpha_2 &  0 & \dots & 0  & &n+\alpha_{k-i+2} &  0&\dots & 0\\
 n+\alpha_3 & 0 & \dots &0&0& n+\alpha_{k-i+3}&0&\dots\\
 \vdots & & & & &\vdots\\
 \vdots & 0 & 0&0&0& n+\alpha_k\\
 n+\alpha_{k-i} & 0 &\vdots\\
 0&\vdots\\
 \vdots\\
 0  
\end{pmatrix}. $$
We constructed $1+1+2+\cdots (k-1)=\binom k2 +1$ affine independent points lying in $P_{k,n,\alpha}$.
\end{proof}
\begin{Lemma}
Let $k,n\in\NN$ and $\alpha=(\alpha_1,...,\alpha_k)\in \NN_0^k$ with $\alpha_1\ge...\ge \alpha_k$. Then
$$ \prod_{i=1}^k s_{(n+\alpha_i)} = \sum_{A\in P_{k,n,\alpha}^{\ZZ}} s_{(|a_1|,...,|a_{k}|)}. $$
\end{Lemma}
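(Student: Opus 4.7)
My plan is to identify the right-hand side with the standard tableau expansion of a product of complete homogeneous symmetric functions. Since $s_{(m)} = h_m$, an iterated application of Pieri's rule (each factor adds a horizontal strip) yields
$$ \prod_{i=1}^k s_{(n+\alpha_i)} \;=\; \sum_{T} s_{\shape(T)}, $$
where the sum runs over all semistandard Young tableaux $T$ of content $(n+\alpha_1,\ldots,n+\alpha_k)$. So the theorem reduces to constructing a content-preserving bijection between such tableaux and $P_{k,n,\alpha}^{\ZZ}$ for which $\shape(T) = (|a_1|,\ldots,|a_k|)$.

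The bijection I will use is
$$ a_{ij} \;=\; \#\{\text{entries equal to } i+j-1 \text{ in row } i \text{ of } T\} \qquad (1 \le i \le k,\ 1 \le j \le k-i+1). $$
Column strictness forces every entry of row $i$ to lie in $\{i,i+1,\ldots,k\}$, so the triangular shape of a partial matrix captures exactly the relevant data. I will verify the three families of polytope conditions in turn. The antidiagonal identity $\sum_{i+j-1=m} a_{ij} = n+\alpha_m$ simply records the total number of letters $m$ in $T$, which equals the $m$th entry of the content. The bounds $0 \le a_{ij} \le n+\alpha_{i+j-1}$ are automatic. Finally, the shape inequalities $\sum_{j'=1}^m a_{ij'} \le \sum_{j'=1}^m a_{(i-1)j'}$ translate, under the substitution $l = i+j-1$, into the classical semistandard inequality that the number of entries $\le l$ in row $i$ does not exceed the number of entries $\le l-1$ in row $i-1$, which is equivalent to column strictness. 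The row sum $|a_i|$ equals the length of row $i$ of $\shape(T)$, giving the desired summand.

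The inverse map is built by filling row $i$ left-to-right with $a_{i1}$ copies of $i$, then $a_{i2}$ copies of $i+1$, and so on; the polytope inequalities ensure each column is strictly increasing, so the result is a genuine SSYT. The only real obstacle is keeping the bookkeeping straight: matching the index shift $l = i+j-1$ between ``letter in the tableau'' and ``column of the partial matrix'' and restricting the summation ranges correctly on both sides. Once this indexing is carefully set up, every polytope condition becomes a familiar semistandard condition and the identity follows without any further input beyond Pieri's rule.
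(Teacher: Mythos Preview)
Your proposal is correct and follows essentially the same approach as the paper: identify $\prod_i s_{(n+\alpha_i)}=h_{(n+\alpha_1,\ldots,n+\alpha_k)}$ with $\sum_T s_{\shape(T)}$ over semistandard tableaux of the given content, and then use the very same bijection $a_{ij}=\#\{\text{entries }i+j-1\text{ in row }i\}$ to identify these tableaux with $P_{k,n,\alpha}^{\ZZ}$. Your write-up is more detailed in checking that the polytope inequalities correspond to semistandardness, but the argument is the same.
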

\begin{proof}
The product $\prod_{i=1}^k s_{(n+\alpha_i)}$ is the homogeneous symmetric function $h_{(n+\alpha_1,...,n+\alpha_k)}$. It follows from the transition matrix between the basis of Schur functions and the basis of homogeneous symmetric functions that $h_{(n+\alpha_1,...,n+\alpha_k)}$ is the sum $\sum_{T} s_{\shape(T)}$ running over all semistandard Young tableaux $T$ of weight $(n+\alpha_1,...,n+\alpha_k)$. For every such tableau $T$ let $a_{i,j}(T)$ be the number of $(i+j-1)$'s in the $i$th row. Then the map given by
$$ T\mapsto \begin{pmatrix}
 a_{11}(T) & a_{12}(T) & a_{13}(T) & \dots & a_{1k}(T)\\
 a_{21}(T) &  a_{22}(T) & \dots & a_{2,(k-1)}(T)\\
 \vdots\\
 a_{(k-1),1}(T) & a_{(k-1),2}(T)\\
 a_{k,1}(T)  
\end{pmatrix} $$ 
is a bijection between the semistandard Young tableaux $T$ of weight $(n+\alpha_1,...,n+\alpha_k)$ and $P_{k,n,\alpha}^{\ZZ}$.
\end{proof}
 \begin{Lemma}\label{delta1hochk}
 Let $k\ge 1$ and $\alpha=(\alpha_1,...,\alpha_k)\in \NN_0^k$ with $\alpha_1\ge...\ge \alpha_k$. We consider the sequence $\{\prod_{i=1}^k s_{(n+\alpha_i)}\}_n\in\Lambda_{\NN}^{|\alpha|,k}$. For all $n\ge 1$, we have
  $$ \Delta^{(1^k)}\left(\prod_{i=1}^k s_{(n+\alpha_i)}\right) $$
  $$ =\sum_{A\in P_{k,n,\alpha}^{\ZZ}:~a_{k,1}=0} s_{(|a_1|,...,|a_{k-1}|)}. $$
 \end{Lemma}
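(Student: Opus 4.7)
The plan is to expand
$$ \Delta^{(1^k)}\prod_{i=1}^k s_{(n+\alpha_i)} = \prod_{i=1}^k s_{(n+\alpha_i)} - \Bigl(\prod_{i=1}^k s_{(n-1+\alpha_i)} + (1^k)\Bigr) $$
using the previous lemma on both terms and to realize the subtracted sum as a subfamily of $P_{k,n,\alpha}^{\ZZ}$. The first term equals $\sum_{A\in P_{k,n,\alpha}^{\ZZ}}s_{(|a_1|,\dots,|a_k|)}$, while the second, after shifting each row-sum by $1$, becomes $\sum_{B\in P_{k,n-1,\alpha}^{\ZZ}}s_{(|b_1|+1,\dots,|b_k|+1)}$.

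To match these terms, I would introduce the map $\phi\colon B\mapsto A$ defined by $a_{i,1}=b_{i,1}+1$ and $a_{i,j}=b_{i,j}$ for $j\ge 2$, i.e.\ adding $1$ to every first-column entry. The claim is that $\phi$ is a bijection from $P_{k,n-1,\alpha}^{\ZZ}$ onto $\{A\in P_{k,n,\alpha}^{\ZZ}:a_{i,1}\ge 1\text{ for all }i\}$. The verification has three easy parts: (i) the antidiagonal sum at level $m$, namely $a_{1,m}+a_{2,m-1}+\cdots+a_{m,1}$, meets the first column only at the last term, so it increases by exactly $1$ under $\phi$, moving from $n-1+\alpha_m$ to $n+\alpha_m$; (ii) every partial-row-sum inequality $\sum_{j=1}^m a_{i,j}\le\sum_{j=1}^m a_{i-1,j}$ is unaffected by a uniform shift of the first column; (iii) the entry bounds $0\le a_{i,1}\le n+\alpha_i$ translate precisely to $0\le b_{i,1}\le n-1+\alpha_i$ once $a_{i,1}\ge 1$ is imposed. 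Since $|a_i|=|b_i|+1$ by construction, the Schur functions indexed by matrices in the image of $\phi$ cancel exactly against the second term.

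What remains in the difference is therefore $\sum_A s_{(|a_1|,\dots,|a_k|)}$ summed over $A\in P_{k,n,\alpha}^{\ZZ}$ having $a_{i,1}=0$ for some $i$. Here I would invoke the $m=1$ instance of the partial-sum constraint, which reads $a_{i,1}\le a_{i-1,1}$: the first column is weakly decreasing, so a zero anywhere in it forces $a_{k,1}=0$, and the subset in question equals $\{A\in P_{k,n,\alpha}^{\ZZ}:a_{k,1}=0\}$. For such $A$ the row $a_k$ consists of the single entry $a_{k,1}=0$, so $|a_k|=0$ and the partition $(|a_1|,\dots,|a_k|)$ truncates to $(|a_1|,\dots,|a_{k-1}|)$, yielding the formula stated in the lemma.

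I do not anticipate a serious obstacle here; the argument is just a combinatorial bijection on the lattice points of the polytope together with the first-column monotonicity observation. The one place that requires a moment's care is the antidiagonal condition under $\phi$, which reduces to the remark that each antidiagonal meets the first column in exactly one cell.
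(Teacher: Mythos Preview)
Your proposal is correct and follows essentially the same argument as the paper: expand $\Delta^{(1^k)}$ via the previous lemma, use the injection $P_{k,n-1,\alpha}^{\ZZ}\to P_{k,n,\alpha}^{\ZZ}$ that adds $1$ to every first-column entry, and identify the complement of its image with $\{A:a_{k,1}=0\}$ using the monotonicity $a_{k,1}\le a_{k-1,1}\le\cdots\le a_{11}$. Your verification of the three conditions and the observation that $|a_k|=0$ truncates the partition are slightly more explicit than the paper's write-up, but the content is identical.
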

 \begin{proof}
 It follows from the previous lemma that 
 $$ \Delta^{(1^k)}\left(\prod_{i=1}^k s_{(n+\alpha_i)}\right)= \sum_{A\in P_{k,n,\alpha}^{\ZZ}} s_{(|a_1|,...,|a_{k}|)}-\sum_{A\in P_{k,n-1,\alpha}^{\ZZ}} s_{(|a_1|+1,...,|a_{k}|+1)}.$$
 There is an injection $ P_{k,n-1,\alpha}^{\ZZ}\rightarrow  P_{k,n,\alpha}^{\ZZ}$ given by
 $$ \begin{pmatrix}
 a_{11} & a_{12} & a_{13} & \dots & a_{1k}\\
 a_{21} &  a_{22} & \dots & a_{2,(k-1)}\\
 \vdots\\
 a_{(k-1),1} & a_{(k-1),2}\\
 a_{k,1}  
\end{pmatrix}\mapsto
\begin{pmatrix}
 a_{11}+1 & a_{12} & a_{13} & \dots & a_{1k}\\
 a_{21}+1 &  a_{22} & \dots & a_{2,(k-1)}\\
 \vdots\\
 a_{(k-1),1}+1 & a_{(k-1),2}\\
 a_{k,1}+1  
\end{pmatrix}. $$
The matrices that are not hit by this map are those with a $0$ in the first column. This property is equivalent to $a_{k,1}=0$ because of $0\le a_{k,1}\le a_{k-1,1}\le...\le a_{11}$.
 \end{proof}
 We look at the polytope $\Delta^{(1^k)}P_{k,n,\alpha}:=\{A\in P_{k,n,\alpha}:~a_{k,1}=0\}$.
 \begin{Proposition}
  Let $k\ge 1$ and $\alpha=(\alpha_1,...,\alpha_k)\in \NN_0^k$ with $\alpha_1\ge...\ge \alpha_k$. $\Delta^{(1^k)}P_{k,n,\alpha}$ is a facet of $P_{k,n,\alpha}$.
 \end{Proposition}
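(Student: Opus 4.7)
The plan is to recognise $\Delta^{(1^k)}P_{k,n,\alpha}$ as the face of $P_{k,n,\alpha}$ obtained by saturating a single defining inequality, and then compute its dimension. Since the inequality $a_{k,1}\ge 0$ appears among the defining inequalities of $P_{k,n,\alpha}$, the set $F:=\Delta^{(1^k)}P_{k,n,\alpha}=P_{k,n,\alpha}\cap\{a_{k,1}=0\}$ is automatically a face. By \ref{polytope} we have $\dim P_{k,n,\alpha}=\binom{k}{2}$, so it suffices to show $\dim F=\binom{k}{2}-1$.

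For the upper bound I would observe that $F$ sits inside the affine subspace cut out by the $k$ row-sum equations defining $P_{k,n,\alpha}$ together with the additional equation $a_{k,1}=0$. The variable $a_{k,1}$ occurs with nonzero coefficient in only one row-sum equation, namely $\sum_{i=1}^{k} a_{i,k+1-i}=n+\alpha_k$, so the extra equation is linearly independent of the original system. The ambient affine space therefore has dimension $\binom{k+1}{2}-(k+1)=\binom{k}{2}-1$, and $\dim F\le\binom{k}{2}-1$.

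For the matching lower bound I would reuse the $\binom{k}{2}+1$ affine independent points explicitly constructed in the proof of \ref{polytope}. Inspection of that construction shows that each ``move'' only shifts a mass from a position $(r,s)$ with $s\ge 2$ to the position $(r+1,s-1)$, so the first column of every constructed matrix coincides with the first column of its corresponding starting point. For every $i\ge 1$ that first column is $(n+\alpha_1,\dots,n+\alpha_{k-i},0,\dots,0)^\top$, so $a_{k,1}=0$; only for $i=0$ does one obtain $a_{k,1}=n+\alpha_k\neq 0$. Dropping this single exceptional point leaves $\binom{k}{2}$ affine independent points lying in $F$, forcing $\dim F\ge\binom{k}{2}-1$ and hence equality.

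Combining the two bounds yields $\dim F=\binom{k}{2}-1=\dim P_{k,n,\alpha}-1$, so $F$ is a facet. I expect the only slightly delicate step to be the affine-independence bookkeeping in the lower bound: one has to verify that the $i=0$ starting point is the unique member of the family of \ref{polytope} violating $a_{k,1}=0$. This is immediate once one notices that the described down-left moves never touch the first column, but it is the sort of detail worth spelling out since everything else reduces to linear algebra on the already-established equation system.
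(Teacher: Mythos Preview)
Your proof is correct and follows essentially the same strategy as the paper: both identify $F=\{a_{k,1}=0\}$ as a face via the defining inequality $a_{k,1}\ge 0$, and both obtain the lower bound $\dim F\ge\binom{k}{2}-1$ by dropping the single $i=0$ point from the $\binom{k}{2}+1$ affine independent points constructed in \ref{polytope}. The only minor difference is your upper bound: you compute it by a rank argument on the enlarged linear system, whereas the paper simply uses that $F$ is a \emph{proper} face (exhibiting a point with $a_{k,1}\neq 0$) and hence has dimension strictly less than $\dim P_{k,n,\alpha}=\binom{k}{2}$.
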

 \begin{proof}
 $\Delta^{(1^k)}P_{k,n,\alpha}$ is a proper face of $P_{k,n,\alpha}$ because of $a_{k,1}= 0$ for all $A\in \Delta^{(1^k)}P_{k,n,\alpha}$ but not for every $A\in P_{k,n,\alpha}$ and $a_{k,1}\ge 0$ for all $A\in P_{k,n,\alpha}$. This face is $\left(\binom k2 -1\right)$-dimensional because the set of $\binom k2 +1$ many affine independent points of $P_{k,n,\alpha}$ given in the proof of \ref{polytope} contains exactly one point that does not lie in $\Delta^{(1^k)}P_{k,n,\alpha}$.
 \end{proof}
 \begin{Lemma}\label{delta1delta21}
 Let $k\ge 1$ and $\alpha=(\alpha_1,...,\alpha_k)\in \NN_0^k$ with $\alpha_1\ge...\ge \alpha_k$. We consider the sequence $\{\prod_{i=1}^k s_{(n+\alpha_i)}\}_n\in\Lambda_{\NN}^{|\alpha|,k}$. For all $n\ge 2$, we have 
  $$ \Delta^{(2,1^{k-2})}\Delta^{(1^k)}\left(\prod_{i=1}^k s_{(n+\alpha_i)}\right) $$
  $$ =\sum_{A\in P_{k,n,\alpha}^{\ZZ}:~a_{k,1}=0\wedge (a_{1k}=0\vee a_{k-1,1}=0)} s_{(|a_1|,...,|a_{k-1}|)}. $$
\end{Lemma}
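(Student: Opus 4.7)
The plan is to compute $\Delta^{(2,1^{k-2})} g_n$ where $g_n := \Delta^{(1^k)}\bigl(\prod_{i=1}^k s_{(n+\alpha_i)}\bigr)$ has already been evaluated in \ref{delta1hochk}, and to realize the cancellation via an explicit injection between two sets of partial matrices. By definition,
$$ \Delta^{(2,1^{k-2})} g_n \;=\; g_n \;-\; \bigl(g_{n-1} + (2,1^{k-2})\bigr)
\;=\; \sum_{\substack{A\in P_{k,n,\alpha}^{\ZZ}\\ a_{k,1}=0}} s_{(|a_1|,\ldots,|a_{k-1}|)} \;-\; \sum_{\substack{A\in P_{k,n-1,\alpha}^{\ZZ}\\ a_{k,1}=0}} s_{(|a_1|+2,|a_2|+1,\ldots,|a_{k-1}|+1)}. $$
So I want an injection $\phi$ from the second index set into the first whose image matches the shifted shapes, and whose complement in the first set is exactly $\{a_{k,1}=0\}\cap\{a_{1k}=0\vee a_{k-1,1}=0\}$.

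I propose the map $\phi$ that adds $1$ to each of the cells $a_{1,1},\,a_{2,1},\,\ldots,\,a_{k-1,1}$ and to $a_{1,k}$, and leaves everything else unchanged (in particular $a_{k,1}=0$ stays $0$). The row-sum bookkeeping is immediate: row $1$ gains $2$ (from $a_{1,1}$ and $a_{1,k}$), each row $i$ with $2\le i\le k-1$ gains $1$ (from $a_{i,1}$), and row $k$ is unchanged. The antidiagonal sums increase by exactly $1$ each: antidiagonal $m$ for $1\le m\le k-1$ gains $1$ from $a_{m,1}$, and antidiagonal $k$ gains $1$ from $a_{1,k}$. This moves us from $P_{k,n-1,\alpha}$ into $P_{k,n,\alpha}$ with the correctly shifted partitions.

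Next I would verify the column partial-sum inequalities $\sum_{j=1}^m a_{i,j}\le \sum_{j=1}^m a_{i-1,j}$. For $2\le i\le k-1$ and any admissible $m\le k-i+1<k$, both sides change by $+1$ (each receives the increment of $a_{i,1}$ or $a_{i-1,1}$; the cell $a_{1,k}$ never enters these sums because $m<k$), so the inequality is preserved. For $i=k$ the left side stays $0$ and the right side becomes $a_{k-1,1}+1\ge 0$. Hence $\phi$ is well-defined and clearly injective. Conversely, $A'\in P_{k,n,\alpha}^{\ZZ}$ with $a'_{k,1}=0$ lies in the image iff we can subtract $1$ from $a'_{1,1},\ldots,a'_{k-1,1},a'_{1,k}$ and remain non-negative. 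Because column $1$ is weakly decreasing downward, the condition $a'_{i,1}\ge 1$ for all $i\le k-1$ collapses to $a'_{k-1,1}\ge 1$, so the image is exactly $\{A':a'_{k,1}=0,\ a'_{k-1,1}\ge 1,\ a'_{1,k}\ge 1\}$ and its complement in $\{a'_{k,1}=0\}$ is the claimed set.

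The main obstacle is routine but finicky: verifying that the column partial-sum inequalities really are preserved and that no auxiliary upper bound $a_{ij}\le n+\alpha_{i+j-1}$ is violated after adding $1$ to the chosen cells (this follows from the antidiagonal-sum constraint together with non-negativity of the remaining entries in the same antidiagonal). The other potential pitfall is the edge case $k=2$: then the range $2\le i\le k-1$ is empty and $\phi$ simply adds $1$ to $a_{1,1}$ and $a_{1,2}$, and the condition $a_{k-1,1}=0$ becomes $a_{1,1}=0$, which (since $a_{1,1}=n+\alpha_1$) is vacuous for $n\ge 1$; this is consistent with the fact that $\Delta^{(2)}\Delta^{(1,1)}s_{(n+\alpha_1)}s_{(n)}=0$ eventually.
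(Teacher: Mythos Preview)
Your proposal is correct and follows exactly the same approach as the paper: the injection you write down, adding $1$ to each of $a_{1,1},\ldots,a_{k-1,1}$ and to $a_{1,k}$ while fixing $a_{k,1}=0$, is precisely the map the paper uses, and your identification of the complement of its image as $\{a_{k,1}=0\}\cap\{a_{1k}=0\vee a_{k-1,1}=0\}$ matches the paper's conclusion. Your write-up is in fact more thorough than the paper's, which simply exhibits the injection and asserts which matrices are not hit without spelling out the verification of the polytope constraints or the $k=2$ edge case.
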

 \begin{proof}
 \ref{delta1hochk} yields
 $$ \Delta^{(2,1^{k-2})}\Delta^{(1^k)}\prod_{i=1}^k s_{(n+\alpha_i)} $$
 $$ =\sum_{A\in P_{k,n,\alpha}^{\ZZ}:~a_{k,1}=0}s_{(|a_1|,...,|a_{k-1}|)} 
 -\sum_{A\in P_{k,n,\alpha}^{\ZZ}:~a_{k,1}=0}s_{(|a_1|+2,|a_2|+1,...,|a_{k-1}|+1)}.   $$
 There is an injective map $\{A\in P_{k,n-1,\alpha}^{\ZZ}:~a_{k,1}=0\}\mapsto \{A\in P_{k,n,\alpha}^{\ZZ}:~a_{k,1}=0\}$ given by
 $$
\begin{pmatrix}
 a_{11} & a_{12} & a_{13} & \dots & a_{1k}\\
 a_{21} & a_{22} & \dots & a_{2,(k-1)}\\
 \vdots\\
 a_{(k-1),1} & a_{(k-1),2}\\
 0  
\end{pmatrix}\mapsto
\begin{pmatrix}
 a_{11}+1 & a_{12} & a_{13} & \dots & a_{1k}+1\\
 a_{21}+1 &  a_{22} & \dots & a_{2,(k-1)}\\
 \vdots\\
 a_{(k-1),1}+1 & a_{(k-1),2}\\
 0  
\end{pmatrix}.
$$
 The matrices that are not hit are those with
$a_{i1}=0$ for a $1\le i\le k-1$ or $a_{1k}=0$. We can reduce the condition to $a_{1k}=0$ or $a_{k-1,1}=0$ because of $0\le a_{k-1,1}\le a_{k-2,1}\le...\le a_{11}$.
 \end{proof}
 Let $\Delta^{(2,1^{k-2})}\Delta^{(1^k)}P_{k,n,\alpha}=\{A\in \Delta^{(1^k)}P_{k,n,\alpha}:~a_{1k}=0\vee a_{k-1,1}=0\}$.
 \begin{Proposition}
 Let $k\ge 1$ and $\alpha=(\alpha_1,...,\alpha_k)\in \NN_0^k$ with $\alpha_1\ge...\ge \alpha_k$. $\Delta^{(2,1^{k-2})}\Delta^{(1^k)}P_{k,n,\alpha}$ is the union of the two facets $\{A\in \Delta^{(1^k)}P_{k,n,\alpha}:~ a_{1,k}=0\}$ and $\{A\in \Delta^{(1^k)}P_{k,n,\alpha}:~a_{k-1,1}=0\}$ of $\Delta^{(1^k)}P_{k,n,\alpha}$.
 \end{Proposition}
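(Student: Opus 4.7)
The plan is to show each of $F_1:=\{A\in\Delta^{(1^k)}P_{k,n,\alpha}:a_{1,k}=0\}$ and $F_2:=\{A\in\Delta^{(1^k)}P_{k,n,\alpha}:a_{k-1,1}=0\}$ is a facet of $\Delta^{(1^k)}P_{k,n,\alpha}$, i.e.\ has dimension $\binom{k}{2}-2$. By \ref{delta1delta21}, $\Delta^{(2,1^{k-2})}\Delta^{(1^k)}P_{k,n,\alpha}$ already equals the union $F_1\cup F_2$ by definition, so this reduces the proposition to the facet claim.

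First I would verify that both sets are faces of $\Delta^{(1^k)}P_{k,n,\alpha}$. Both $a_{1,k}\ge 0$ and $a_{k-1,1}\ge 0$ are among the defining non-negativity inequalities of $P_{k,n,\alpha}$, and the row-sum monotonicity $a_{k,1}\le a_{k-1,1}$ ensures that $a_{k-1,1}=0$ automatically forces $a_{k,1}=0$, so $F_2\subseteq\Delta^{(1^k)}P_{k,n,\alpha}$. Both faces are proper and non-empty, witnessed by points already produced in the proof of \ref{polytope}: the point of family $i=1$ lies in $\Delta^{(1^k)}P_{k,n,\alpha}$ with $a_{1,k}=n+\alpha_k>0$ and $a_{k-1,1}=n+\alpha_{k-1}>0$, while the second point of family $i=2$ satisfies $a_{1,k}=0$ and the starting point of family $i=2$ satisfies $a_{k-1,1}=0$.

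The main step is a codimension count. The affine hull of $\Delta^{(1^k)}P_{k,n,\alpha}$ is cut out from $\RR^{\binom{k+1}{2}}$ by the $k$ equations of $P_{k,n,\alpha}$ together with $a_{k,1}=0$; these are $k+1$ independent linear conditions, reproducing the dimension $\binom{k}{2}-1$ established in the previous proposition. Crucially, each variable $a_{i,j}$ occurs in exactly one of these equations, namely the one for $m=i+j-1$. After substituting $a_{k,1}=0$, the equation for $m=k$ becomes $a_{1,k}+a_{2,k-1}+\dots+a_{k-1,2}=n+\alpha_k$, and the variables $a_{2,k-1},\dots,a_{k-1,2}$ appear in no other equation, so (for $k\ge 3$) the functional $a_{1,k}$ is not constant on the affine hull. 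The same argument applied to the equation for $m=k-1$ shows that $a_{k-1,1}$ is likewise not constant, its only defining equation being $a_{1,k-1}+\dots+a_{k-2,2}+a_{k-1,1}=n+\alpha_{k-1}$ whose other variables also appear nowhere else. The standard polytope-theoretic fact that a face cut by a non-constant supporting linear functional has codimension exactly one then yields $\dim F_1=\dim F_2=\binom{k}{2}-2$, giving the facet claim.

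I expect no real obstacle beyond this bookkeeping; the sparse variable-per-equation structure of $P_{k,n,\alpha}$ makes the non-constancy check immediate, and no fresh construction of affinely independent points beyond those built in \ref{polytope} is needed. The only genuinely degenerate case is $k=2$, where $a_{k-1,1}=n+\alpha_1>0$ and $a_{1,k}=n+\alpha_2>0$ hold everywhere on $\Delta^{(1,1)}P_{2,n,\alpha}$; in that case both $F_1,F_2$ (and hence $\Delta^{(2)}\Delta^{(1,1)}P_{2,n,\alpha}$) are empty and the statement holds vacuously.
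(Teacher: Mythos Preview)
Your argument has a genuine gap. The claimed ``standard polytope-theoretic fact'' --- that a face cut out by a non-constant supporting linear functional has codimension exactly one --- is false. Non-constancy of $\ell$ on the affine hull of a polytope $Q$ only guarantees that the face $\{x\in Q:\ell(x)=\min_Q\ell\}$ is \emph{proper}; it says nothing about a lower bound on its dimension. For instance, for the tetrahedron $\{x,y,z\ge 0,\ x+y+z\le 1\}\subseteq\RR^3$, the valid inequality $x+y\ge 0$ has a non-constant functional, yet the face $\{x+y=0\}$ is an edge of codimension~$2$. The H-description of $P_{k,n,\alpha}$ in the paper is redundant (each upper bound $a_{ij}\le n+\alpha_{i+j-1}$ follows from non-negativity of the other entries on the same antidiagonal together with the antidiagonal equality), so one cannot simply assert that a listed inequality is facet-defining. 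Your check that $a_{1,k}$ and $a_{k-1,1}$ are linearly independent of the equalities gives only $\dim F_i\le\binom{k}{2}-2$, not equality.

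The paper closes this gap by exhibiting $\binom{k}{2}-1$ affinely independent points in each $F_i$. For $F_2$ your expectation is essentially correct: of the $\binom{k}{2}+1$ points built in \ref{polytope}, exactly the two with $i=0$ and $i=1$ have $a_{k-1,1}>0$, and the first of these is also the unique point with $a_{k,1}>0$; hence $\binom{k}{2}-1$ of them already lie in $F_2$. For $F_1$, however, your claim that ``no fresh construction of affinely independent points is needed'' is wrong. Among the $\binom{k}{2}$ listed points with $a_{k,1}=0$, the starting point of every family $i=1,\dots,k-1$ has $a_{1,k}=n+\alpha_k>0$, leaving only $\binom{k-1}{2}$ points in $F_1$ --- short of the required $\binom{k}{2}-1$ by $k-2$ for every $k\ge 3$. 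The paper therefore constructs an entirely new collection of $\binom{k}{2}-1$ affinely independent points lying in $F_1$, and this construction is the actual content of the proof.
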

 \begin{proof}
 The two sets are proper subsets of $\Delta^{(1^k)}P_{k,n,\alpha}$ and faces because of $a_{1,k}\ge 0$ and $a_{k-1,1}\ge 0$ for all $A\in \Delta^{(1^k)}P_{k,n,\alpha}$. $\{A\in \Delta^{(1^k)}P_{k,n,\alpha}:~ a_{k-1,1}=0\}$ is a facet of $\Delta^{(1^k)}P_{k,n,\alpha}$ because the set of $\binom k2 +1$ many affine independent points of $P_{k,n,\alpha}$ given in the proof of \ref{polytope} contains exactly two points with $a_{k-1,1}>0$. For $\{A\in \Delta^{(1^k)}P_{k,n,\alpha}:~a_{1,k}=0\}$ we slightly modify the list of affine independent points given in the proof of \ref{polytope}. For every $i\in \{2,...,k-2\}$ we take the $i$ points 
 $$ \begin{pmatrix}
 n+\alpha_1 & 0 & 0 & \dots & 0 & n+\alpha_{k-i+1} & \dots & n+\alpha_{k-1} & 0\\
 n+\alpha_2 &  0 & \dots & 0 &0 &\dots &0 &  n+\alpha_k\\
 \vdots&\vdots\\
 n+\alpha_{k-i} & 0\\
 0&\vdots\\
 \vdots\\
 0  
\end{pmatrix}, $$
$$ \begin{pmatrix}
 n+\alpha_1 & 0 & 0 & \dots & 0 & n+\alpha_{k-i+1} & \dots & n+\alpha_{k-2} & 0& 0\\
 n+\alpha_2 &  0 & \dots & 0 &0 & \dots &0 &  n+\alpha_{k-1} & 0\\
 n+\alpha_3 & 0 & \dots &0&0&\dots&0& n+\alpha_k\\
 \vdots&\vdots\\
 n+\alpha_{k-i} & 0\\
 0&\vdots\\
 \vdots\\
 0  
\end{pmatrix},..., $$
$$ \begin{pmatrix}
 n+\alpha_1 & 0 & 0 & \dots & 0 & n+\alpha_{k-i+1} & \dots & 0 & 0& 0\\
 n+\alpha_2 &  0 & \dots & 0  & &n+\alpha_{k-i+2} &  0&\dots & 0\\
 n+\alpha_3 & 0 & \dots &0&0& n+\alpha_{k-i+3}&0&\dots\\
 \vdots & & & & &\vdots\\
 \vdots & 0 & 0&0&0& n+\alpha_k\\
 n+\alpha_{k-i} & 0 &\vdots\\
 0&\vdots\\
 \vdots\\
 0  
\end{pmatrix} $$
and  
$$ \begin{pmatrix}
 n+\alpha_1 & 0  & \dots &0 & 1 & n+\alpha_{k-i+1} & \dots & 0 & 0& 0\\
 n+\alpha_2 &  0 & \dots & 0  & &n+\alpha_{k-i+2} &  0&\dots & 0\\
 n+\alpha_3 & 0 & \dots &0&0& n+\alpha_{k-i+3}&0&\dots\\
 \vdots & & & & &\vdots\\
 \vdots & 0 & 0&0&0& n+\alpha_k\\
 n+\alpha_{k-i}-1 & 0 &\vdots\\
 0&\vdots\\
 \vdots\\
 0  
\end{pmatrix}. $$
We additionally take the $k-1$ points
$$ \begin{pmatrix} 
   n+\alpha_1 & n+\alpha_2 & \dots &\dots & \dots & n+\alpha_{k-1} &0\\
   0 &0&\dots & 0 & 0 &  n+\alpha_{k}\\
   0&\dots & \dots &0&  0\\
   \vdots\\
   0&0&0&0\\
   0&0&0\\
   0&0\\
   0 
  \end{pmatrix},\dots, 
  \begin{pmatrix} 
   n+\alpha_1 & n+\alpha_2 & \dots &0& 0 & 0 &0\\
   0 &n+\alpha_3&\dots & 0 & 0 &  0\\
   0&\vdots & \dots &0&  0\\
   \vdots\\
   0&n+\alpha_{k-2}&0&0\\
   0&n+\alpha_{k-1}&0\\
   0&n+\alpha_k\\
   0 
  \end{pmatrix}$$
  and
  $$
  \begin{pmatrix} 
   n+\alpha_1 & n+\alpha_2-1 & \dots &0& 0 & 0 &0\\
   1 &n+\alpha_3&\dots & 0 & 0 &  0\\
   0&\vdots & \dots &0&  0\\
   \vdots\\
   0&n+\alpha_{k-2}&0&0\\
   0&n+\alpha_{k-1}&0\\
   0&n+\alpha_k\\
   0 
  \end{pmatrix}.$$
  These are $2+3+...+(k-2)+(k-1)=\binom{k}{2}-1$ many affine independent points.
 \end{proof}
 Now we can prove \ref{main4}.
 \begin{proof}[Proof of \ref{main4}]
 $(a)$
 It follows from \ref{delta1delta21} that for all $n\ge 2$:
  $$ \Delta^{(2)}\Delta^{(1,1)}s_{(n+\alpha_1)}s_{(n)} $$
  $$ =\sum_{A} s_{(|a_1|,|a_2|)} $$
  running over all
    $$ A=\begin{pmatrix}
 a_{11} & a_{12}\\
 a_{21}
\end{pmatrix}\in P_{2,n,\alpha}^{\ZZ}   $$
with
 $$ a_{21}=0,~a_{11}= n+\alpha_{1},~ a_{12}= n, $$
$$
a_{11}=0~\text{or}~a_{12}=0.$$
But this cannot be for $n>0$. Therefore the sum is zero.\\
We show the minimality of the set $\{\Delta^{(2)},\Delta^{(1,1)}\}$ next.
By \ref{delta1hochk}, we have 
$$ \Delta^{(1,1)}s_{(n+\alpha_1)}s_{(n)} $$
  $$ =\sum_{A} s_{(|a_1|,|a_2|)} $$
  running over all
    $$ A=\begin{pmatrix}
 a_{11} & a_{12}\\
 a_{21}
\end{pmatrix}\in P_{2,n,\alpha}^{\ZZ}   $$
with
 $$ a_{21}=0,~a_{11}= n+\alpha_{1},~ a_{12}= n. $$
 This is the matrix 
 $$ A= \begin{pmatrix}
 n+\alpha_1 & n\\
 0
\end{pmatrix}$$
and the above sum is not zero. The term $\Delta^{(2)}s_{(n+\alpha_1)}s_{(n)}$ does also not equal zero because the sum
$$s_{(n+\alpha_1)}s_{(n)}=\sum_{A\in P_{2,n,\alpha}^{\ZZ}} s_{(|a_1|,|a_2|)} $$
has the summand $s_{(n+\alpha_1,n)}$ that is not cancelled by applying $\Delta^{(2)}$ because $|a_1|\ge n-1+\alpha_1$ for all $A\in P_{2,n-1,\alpha}^{\ZZ}$. 
\\
$(b)$
 It follows from \ref{delta1delta21} that for all $n\ge 2$:
 $$ \Delta^{(2,1)}\Delta^{(1,1,1)}s_{(n+\alpha_1)}s_{(n+\alpha_2)}s_{(n)} $$
  $$ =\sum_{A} s_{(|a_1|,|a_2|)} $$
  running over all
    $$ A=\begin{pmatrix}
 a_{11} & a_{12} & a_{13}\\
 a_{21} &  a_{22}\\
 a_{31}
\end{pmatrix}\in P_{3,n,\alpha}^{\ZZ}   $$
with
$$ a_{31}=0, $$
$$
a_{13}=0~\text{or}~
 a_{21}=0.  
$$
We want to apply the map $\Delta^{(3)}$ next. We treat the two sets $Q_{n,21}=\{A\in P_{3,n-1,\alpha}^{\ZZ}~|~a_{31}=0\wedge a_{21}=0\}$ and $Q_{n,13}=\{A\in P_{3,n-1,\alpha}^{\ZZ}~|~a_{31}=0\wedge a_{13}=0\wedge a_{21}>0\}$ separately.
There is an injection $Q_{n-1,21}\rightarrow Q_{n,21}$ given by
$$ \begin{pmatrix}
 a_{11} & a_{12} & a_{13}\\
 0 &  a_{22}\\
 0
\end{pmatrix}\mapsto
\begin{pmatrix}
 a_{11}+1 & a_{12}+1 & a_{13}+1\\
 0&  a_{22}\\
 0
\end{pmatrix}. $$
The only matrix that is not hit by this map is 
$\begin{pmatrix}
 n+\alpha_1 & n+\alpha_2 & 0\\
 0&  n\\
 0
\end{pmatrix}$. It follows
$$ \Delta^{(3)}\Delta^{(2,1)}\Delta^{(1,1,1)}s_{(n+\alpha_1)}s_{(n+\alpha_2)}s_{(n)}$$ $$=
s_{(2n+|\alpha|,n)}+\sum_{A\in Q_{n,13}}s_{(|a_1|,|a_2|)}-\sum_{B\in Q_{n-1,13}} s_{(|b_1|,|b_2|)} $$
 We have
$$ Q_{n,13}=\left\{
\begin{pmatrix}
 n+\alpha_1 & n+\alpha_2-a_{21} & 0\\
 a_{21} &  n\\
 0
\end{pmatrix}~|~a_{21}\in \left\{1,...,\left\lfloor \frac{n+|\alpha|}{2} \right\rfloor\right\}\right\}. $$
It follows that for every $B\in Q_{n-1,13}$ there is exactly one $A\in Q_{n,13}$ with $(|a_1|,|a_2|)=(|b_1|+3,|b_2|)$. It is the matrix $A$ with $a_{21}=b_{21}-1$. There is one matrix in $Q_{n-1,13}$ and one or two matrices in $Q_{n,13}$ not involved in this correspondence depending on the parity of $n+|\alpha|$. These matrices are 
$$ \begin{pmatrix}
 n-1+\alpha_1 & n-2+\alpha_2 & 0\\
 1 &  n-1\\
 0
\end{pmatrix}\in Q_{n-1,13}, $$
$$ \begin{pmatrix}
 n+\alpha_1 & (n+\alpha_2-\alpha_1)/2+1 & 0\\
 (n+|\alpha|)/2-1 &  n\\
 0
\end{pmatrix},
\begin{pmatrix}
 n+\alpha_1 & (n+\alpha_2-\alpha_1)/2 & 0\\
 (n+|\alpha|)/2 &  n\\
 0
\end{pmatrix} \in Q_{n,13} $$ 
$$\text{if $n+|\alpha|$ is even}, $$
$$ \begin{pmatrix}
 n+\alpha_1 & (n+\alpha_2-\alpha_1+1)/2 & 0\\
 (n+|\alpha|-1)/2 &  n\\
 0
\end{pmatrix}\in Q_{n,13}~\text{if $n+|\alpha|$ is odd}. $$
 Now in 
$\Delta^{(3)}\Delta^{(2,1)}\Delta^{(1,1,1)}(s_{(n+\alpha_1)}s_{(n+\alpha_2)}s_{(n)})$, the Schur function $s_{(2n+|\alpha|,n)}$ from before is subtracted and what is left is
$$ s_{(3n+|\alpha|)/2+1,(3n+|\alpha|)/2-1)}
+s_{((3n+|\alpha|)/2,(3n+|\alpha|)/2)},~\text{if $n+|\alpha|$ is even} $$
$$s_{((3n+|\alpha|+1)/2,(3n+|\alpha|-1)/2)},~\text{if $n+|\alpha|$ is odd}. $$
Applying $\Delta_2^{(3,3)}$ to this yields $0$.\\
We show the minimality of the set $\{\Delta^{(3,3)}_2,\Delta^{(3)},\Delta^{(2,1)},\Delta^{(1^3)}\}$ next. We see above that $\Delta^{(3)}\Delta^{(2,1)}\Delta^{(1,1,1)}(s_{(n+\alpha_1)}s_{(n+\alpha_2)}s_{(n)})$ is not zero. The term $\Delta^{(3,3)}_2\Delta^{(3)}\Delta^{(2,1)}(s_{(n+\alpha_1)}s_{(n+\alpha_2)}s_{(n)})$ is not zero because the summand $s_{(n+\alpha_1,n+\alpha_2,n)}$ in the Schur function expansion of $s_{(n+\alpha_1)}s_{(n+\alpha_2)}s_{(n)}$ is not cancelled by $\Delta^{(3)}$ or $\Delta^{(2,1)}$ because $|a_1|\ge n-1+\alpha_2$ for all $A\in P_{3,n-1,\alpha}^{\ZZ}$ and it is not cancelled by $\Delta^{(3,3)}_2$ because $|a_1|\ge n-2+\alpha_2$ for all $A\in P_{3,n-2,\alpha}^{\ZZ}$. The term $\Delta^{(3,3)}_2\Delta^{(2,1)}\Delta^{(1^3)}(s_{(n+\alpha_1)}s_{(n+\alpha_2)}s_{(n)})$ is not zero because the summand $s_{(3n+|\alpha|)}$ in the Schur function expansion of $s_{(n+\alpha_1)}s_{(n+\alpha_2)}s_{(n)}$ is not cancelled. In order to show that the term $\Delta^{(3,3)}_2\Delta^{(3)}\Delta^{(1^3)}(s_{(n+\alpha_1)}s_{(n+\alpha_2)}s_{(n)})$ is not zero we show that the multiplicity of $s_{(2n+|\alpha|,n)}$ in its Schur function decomposition is not zero. We denote the multiplicity of a Schur function $s_{\lambda}$ in a symmetric function $f$ by $\mult(\lambda,f)$. Now, we have
$$ \mult((2n+|\alpha|,n),\Delta^{(3,3)}_2\Delta^{(3)}\Delta^{(1^3)}(s_{(n+\alpha_1)}s_{(n+\alpha_2)}s_{(n)})) $$
$$ =\mult((2n+|\alpha|,n),\Delta^{(1^3)}(s_{(n+\alpha_1)}s_{(n+\alpha_2)}s_{(n)}))$$ $$-\mult((2n-3+|\alpha|,n),\Delta^{(1^3)}(s_{(n-1+\alpha_1)}s_{(n-1+\alpha_2)}s_{(n-1)})) $$
$$ -\mult((2n-3+|\alpha|,n-3),\Delta^{(1^3)}(s_{(n-2+\alpha_1)}s_{(n-2+\alpha_2)}s_{(n-2)}))$$ $$+\mult((2n-6+|\alpha|,n-3),\Delta^{(1^3)}(s_{(n-3+\alpha_1)}s_{(n-3+\alpha_2)}s_{(n-3)})). $$
The four involved numbers count in this order the number of matrices in $\Delta^{(1^4)}P_{4,n,\alpha}^{\ZZ}$ of the form
$$ \begin{pmatrix}
 n+\alpha_1 & n+\alpha_2-a_{21} & a_{21}\\
 a_{21} &  n-a_{21}\\
 0
\end{pmatrix}~\text{for all $a_{21}\in \{0,...,n\}$}, $$
$$ \begin{pmatrix}
 n-1+\alpha_1 & n-1+\alpha_2-a_{21} & a_{21}-1\\
 a_{21} &  n-a_{21}\\
 0
\end{pmatrix}~\text{for all $a_{21}\in \{1,...,n-1+\min\{1,\alpha_2\}\}$}, $$
$$ \begin{pmatrix}
 n-2+\alpha_1 & n-2+\alpha_2-a_{21} & a_{21}+1\\
 a_{21} &  n-3-a_{21}\\
 0
\end{pmatrix}~\text{for all $a_{21}\in \{0,...,n-3\}$}, $$
$$ \begin{pmatrix}
 n-3+\alpha_1 & n-3+\alpha_2-a_{21} & a_{21}\\
 a_{21} &  n-3-a_{21}\\
 0
\end{pmatrix}~\text{for all $a_{21}\in \{0,...,n-3\}$}. $$
It follows
$$ \mult((2n+|\alpha|,n),\Delta^{(3,3)}_2\Delta^{(3)}\Delta^{(1^3)}(s_{(n+\alpha_1)}s_{(n+\alpha_2)}s_{(n)}))$$ $$=n+1-(n-1+\min\{1,\alpha_2\})-(n-2)+(n-2)\ge 1. $$
  \end{proof}
  The next lemmas are dedicated to the statement \ref{vierfachhom} about fourfold products.
\begin{Lemma}
We have 
$$ (\Delta^{(2,1,1)})^2\Delta^{(1^4)} (s_{(n+\alpha_1)}s_{(n+\alpha_2)}s_{(n+\alpha_3)}s_{(n)}) $$
$$ = \sum_{A\in \Delta^{(2,1,1)}\Delta^{(1^4)}P_{4,n,\alpha}^{\ZZ}:~a_{13}=0\lor a_{32}=0\lor a_{31}=a_{21}\lor a_{21}+a_{22}+a_{23}=a_{11}+a_{12}+a_{13}}s_{(|a_1|,|a_2|,|a_3|)}. $$
 \end{Lemma}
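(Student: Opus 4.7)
The plan is to follow the pattern established in \ref{delta1hochk} and \ref{delta1delta21}: expand
$$ (\Delta^{(2,1,1)})^2\Delta^{(1^4)}f_n = \Delta^{(2,1,1)}\Delta^{(1^4)}f_n - \bigl(\Delta^{(2,1,1)}\Delta^{(1^4)}f_{n-1}+(2,1,1)\bigr), $$
where $f_n=\prod_{i=1}^4 s_{(n+\alpha_i)}$, use \ref{delta1delta21} to rewrite both summands as sums of $s_{(|a_1|,|a_2|,|a_3|)}$ over $\Delta^{(2,1,1)}\Delta^{(1^4)}P_{4,\cdot,\alpha}^{\ZZ}$, and then exhibit an injection from the source at $n-1$ into the target at $n$ whose complement is exactly the claimed index set.

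For the injection I propose
$$ \phi\colon \Delta^{(2,1,1)}\Delta^{(1^4)}P_{4,n-1,\alpha}^{\ZZ}\longrightarrow \Delta^{(2,1,1)}\Delta^{(1^4)}P_{4,n,\alpha}^{\ZZ},\qquad A\mapsto A+E_{11}+E_{13}+E_{21}+E_{32}, $$
where $E_{ij}$ denotes the matrix whose only nonzero entry is a $1$ in position $(i,j)$. The four positions were chosen so that they meet each anti-diagonal exactly once, so the column sum equalities $\sum_{i=1}^m a_{i,m+1-i}=n+\alpha_m$ shift correctly by $+1$ in each column; they add $2$ to row $1$ and $1$ to rows $2$ and $3$, matching the row-sum shift $(2,1,1,0)$ dictated by the partition $(2,1,1)$; and they avoid $a_{41}$, $a_{14}$, $a_{31}$, so the defining conditions $a_{41}=0$ and $a_{14}=0\lor a_{31}=0$ are preserved. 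Since $\phi$ translates the row-sum vector by $(2,1,1,0)$, the Schur function indexed by $\phi(A)$ at level $n$ matches the one indexed by $A$ at level $n-1$ after the shift, so the image of $\phi$ cancels in the difference and only the complement survives.

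To identify the complement of $\phi$ in $\Delta^{(2,1,1)}\Delta^{(1^4)}P_{4,n,\alpha}^{\ZZ}$, I analyse what can fail when subtracting $E_{11}+E_{13}+E_{21}+E_{32}$ from a target: either one of $a_{11},a_{13},a_{21},a_{32}$ vanishes, in which case the preimage has a negative entry, or the preimage violates one of the triangular inequalities $\sum_{j\le m}a_{ij}\le \sum_{j\le m}a_{(i-1)j}$. The condition $a_{11}=0$ is impossible for $n\ge 1$ since $a_{11}=n+\alpha_1$, and $a_{21}=0$ forces $a_{31}=0$ and hence is subsumed by the condition $a_{31}=a_{21}$. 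Among the triangular inequalities, precisely $a_{31}\le a_{21}$ and $a_{21}+a_{22}+a_{23}\le a_{11}+a_{12}+a_{13}$ acquire a slack of $1$ under $\phi$, because $\phi$ increases $a_{21}$ but not $a_{31}$ and increases $a_{11}+a_{12}+a_{13}$ by $2$ while increasing $a_{21}+a_{22}+a_{23}$ by only $1$; all other triangular inequalities shift by the same amount on both sides and therefore impose no new restriction on the image. This leaves exactly the four disjuncts $a_{13}=0\lor a_{32}=0\lor a_{31}=a_{21}\lor a_{21}+a_{22}+a_{23}=a_{11}+a_{12}+a_{13}$ describing the complement.

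The main obstacle is the bookkeeping of the eight triangular inequalities and of the two boundary equalities in both directions: one has to verify that a target satisfying any of the four disjuncts really has no valid preimage in the source set, and that conversely a target satisfying none of them does admit one. The case check is mechanical but must be exhaustive; once complete, the lemma follows immediately from the cancellation produced by $\phi$.
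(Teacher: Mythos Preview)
Your proposal is correct and follows essentially the same approach as the paper: the same injection $A\mapsto A+E_{11}+E_{13}+E_{21}+E_{32}$ from $\Delta^{(2,1,1)}\Delta^{(1^4)}P_{4,n-1,\alpha}^{\ZZ}$ into $\Delta^{(2,1,1)}\Delta^{(1^4)}P_{4,n,\alpha}^{\ZZ}$, and the same identification of the complement (with the observation that $a_{21}=0$ is absorbed by $a_{31}=a_{21}$). Your check of which triangular inequalities pick up slack under $\phi$ is in fact slightly more explicit than the paper's, which simply records the outcome.
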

 \begin{proof}
  \ref{delta1delta21} yields
$$\Delta^{(2,1,1)}\Delta^{(1^4)}(s_{(n+\alpha_1)}s_{(n+\alpha_2)}s_{(n+\alpha_3)}s_{(n)})=\sum_{A\in \Delta^{(2,1,1)}\Delta^{(1^4)}P_{4,n,\alpha}^{\ZZ}}s_{(|a_1|,|a_2|,|a_3|)}.$$
 There is an injective map $\Delta^{(2,1,1)}\Delta^{(1^4)}P_{4,n-1,\alpha}^{\ZZ}\rightarrow \Delta^{(2,1,1)}\Delta^{(1^4)}P_{4,n,\alpha}^{\ZZ}$ given by
 $$  \begin{pmatrix}
 a_{11} & a_{12} & a_{13} & a_{14}\\
 a_{21} & a_{22} & a_{23}\\
 a_{31} & a_{32}\\
 0  
\end{pmatrix}\mapsto
\begin{pmatrix}
 a_{11}+1 & a_{12} & a_{13}+1 & a_{14}\\
 a_{21}+1 & a_{22} & a_{23}\\
 a_{31} & a_{32}+1\\
 0   
\end{pmatrix}. $$
The matrices that are not hit are those with $a_{13}=0\lor a_{21}=0\lor a_{32}=0\lor a_{31}=a_{21}\lor a_{21}+a_{22}+a_{23}=a_{11}+a_{12}+a_{13}$. $a_{21}=0$ implies $a_{31}=a_{21}$ because of $0\le a_{31}\le a_{21}$.
\end{proof}
Let $(\Delta^{(2,1,1)})^2\Delta^{(1^4)}P_{4,n,\alpha}:=\{A\in \Delta^{(2,1,1)}\Delta^{(1^4)}P_{4,n,\alpha}:~a_{13}=0\lor a_{32}=0\lor a_{31}=a_{21}\lor a_{21}+a_{22}+a_{23}=a_{11}+a_{12}+a_{13}\}$. We know so far that   
$$P_{4,n,\alpha}\supseteq \Delta^{(1^4)}P_{4,n,\alpha}\supseteq \Delta^{(2,1,1)}\Delta^{(1^4)}P_{4,n,\alpha}$$ 
is a sequence of unions of faces of $P_{4,n,\alpha}$ with corresponding dimensions 
$$ 6>5>4 $$
where we say that the dimension of a union of polytopes is the maximum of the dimensions of the polytopes.
Here, we have again that the subset $(\Delta^{(2,1,1)})^2\Delta^{(1^4)}P_{4,n,\alpha}\subseteq \Delta^{(2,1,1)}\Delta^{(1^4)}P_{4,n,\alpha}$ is a union of faces. Its dimension is $3$ because the face 
$\{A\in P_{4,n,\alpha}:~a_{41}=a_{14}=a_{13}=0\}\subseteq (\Delta^{(2,1,1)})^2\Delta^{(1^4)}P_{4,n,\alpha}$ contains the $4$ affine independent points
$$  \begin{pmatrix}
 n+\alpha_1 & n+\alpha_2 & 0 & 0\\
 0 & n+\alpha_3 & n\\
 0 & 0\\
 0   
\end{pmatrix},
\begin{pmatrix}
 n+\alpha_1 & n+\alpha_2 & 0 & 0\\
 0 & n+\alpha_3 & 0\\
 0 & n\\
 0   
\end{pmatrix}, $$  $$\begin{pmatrix}
 n+\alpha_1 & n+\alpha_2-1 & 0 & 0\\
 1 & n+\alpha_3 & 0\\
 0 & n\\
 0   
\end{pmatrix},
\begin{pmatrix}
 n+\alpha_1 & n+\alpha_2-1 & 0 & 0\\
 1 & n+\alpha_3-1 & n\\
 1 & 0\\
 0   
\end{pmatrix}. $$
\begin{Lemma}
We have 
$$ \Delta^{(2,2)}(\Delta^{(2,1,1)})^2\Delta^{(1^4)}(s_{(n+\alpha_1)}s_{(n+\alpha_2)}s_{(n+\alpha_3)}s_{(n)}) $$
$$ =\sum_{A\in (\Delta^{(2,1,1)})^2\Delta^{(1^4)}P_{4,n,\alpha}^{\ZZ}:~a_{12}=0\lor a_{22}=0\lor a_{23}=0\lor a_{21}+a_{22}=a_{11}+a_{12}\lor a_{31}+a_{32}=a_{21}+a_{22}} s_{(|a_1|,|a_2|,|a_3|)}. $$
\end{Lemma}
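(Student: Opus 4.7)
The plan is to proceed exactly as in the two preceding lemmas: apply $\Delta^{(2,2)}$ to the Schur-function expansion supplied by the previous lemma, which reduces the identity to producing an injection
\[ \varphi:(\Delta^{(2,1,1)})^2\Delta^{(1^4)}P_{4,n-1,\alpha}^{\ZZ}\longrightarrow (\Delta^{(2,1,1)})^2\Delta^{(1^4)}P_{4,n,\alpha}^{\ZZ} \]
that shifts the row-sum vector $(|a_1|,|a_2|,|a_3|,|a_4|)$ by $(+2,+2,0,0)$. The complement of the image in $(\Delta^{(2,1,1)})^2\Delta^{(1^4)}P_{4,n,\alpha}^{\ZZ}$ will then be the index set on the right-hand side of the lemma.

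First I would propose the concrete map
\[ \varphi: \begin{pmatrix}
 a_{11} & a_{12} & a_{13} & a_{14}\\
 a_{21} & a_{22} & a_{23}\\
 a_{31} & a_{32}\\
 0
\end{pmatrix}
\mapsto
\begin{pmatrix}
 a_{11}+1 & a_{12}+1 & a_{13} & a_{14}\\
 a_{21} & a_{22}+1 & a_{23}+1\\
 a_{31} & a_{32}\\
 0
\end{pmatrix}. \]
A short verification shows that each anti-diagonal sum increases by exactly $1$ (so the weight $n-1+\alpha_m$ becomes $n+\alpha_m$) and that the row sums shift by $(+2,+2,0,0)$, as required by $\Delta^{(2,2)}$. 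To see that $\varphi(A)$ actually lies in $(\Delta^{(2,1,1)})^2\Delta^{(1^4)}P_{4,n,\alpha}^{\ZZ}$, I would check that the partial-sum inequalities of $P_{4,n,\alpha}$ are relaxed by at most one unit and are therefore preserved, and that each disjunctive face condition $a_{41}=0$, $a_{14}=0\lor a_{31}=0$, $a_{13}=0\lor a_{32}=0\lor a_{31}=a_{21}\lor a_{21}+a_{22}+a_{23}=a_{11}+a_{12}+a_{13}$ survives. The last point is the only real thing to check; it follows because $\varphi$ fixes $a_{14},a_{31},a_{13},a_{32}$, preserves the relation $a_{31}=a_{21}$ (neither side moves), and shifts both sides of $a_{21}+a_{22}+a_{23}=a_{11}+a_{12}+a_{13}$ by $+2$.

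Next I would characterize the matrices $A'\in (\Delta^{(2,1,1)})^2\Delta^{(1^4)}P_{4,n,\alpha}^{\ZZ}$ missing from the image of $\varphi$. Writing out the inverse $A\leftarrow A'$ and asking for the preimage $A$ to lie in $P_{4,n-1,\alpha}^{\ZZ}$ produces the seven potential obstructions $a'_{11}=0$, $a'_{12}=0$, $a'_{22}=0$, $a'_{23}=0$, $a'_{21}=a'_{11}$, $a'_{21}+a'_{22}=a'_{11}+a'_{12}$, and $a'_{31}+a'_{32}=a'_{21}+a'_{22}$. The first is automatic since $a'_{11}=n+\alpha_1\ge 1$; the fifth is subsumed by $a'_{12}=0$, because $a'_{21}=a'_{11}$ together with $a'_{12}+a'_{21}=n+\alpha_2$ and $\alpha_1\ge\alpha_2$ forces $a'_{12}=\alpha_2-\alpha_1\le 0$. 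The remaining five conditions are exactly those appearing in the statement of the lemma.

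The main obstacle I anticipate is the careful bookkeeping required to verify in both directions that the disjunctive face conditions defining $(\Delta^{(2,1,1)})^2\Delta^{(1^4)}P_{4,n,\alpha}^{\ZZ}$ are preserved under $\varphi$, and to confirm that the subsumption of $a'_{21}=a'_{11}$ under $a'_{12}=0$ depends essentially on the ordering hypothesis $\alpha_1\ge\alpha_2$. Both steps are routine case checks, but they are numerous enough that writing them down cleanly is the bulk of the proof.
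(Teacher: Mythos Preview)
Your proposal is correct and follows exactly the paper's approach: the paper's proof consists solely of writing down the same injection $\varphi$ (incrementing $a_{11},a_{12},a_{22},a_{23}$ by one) and asserting that the matrices not hit are precisely those with $a_{12}=0$, $a_{22}=0$, $a_{23}=0$, $a_{21}+a_{22}=a_{11}+a_{12}$, or $a_{31}+a_{32}=a_{21}+a_{22}$. Your write-up is in fact more thorough than the paper's, since you explicitly verify that the disjunctive face conditions of $(\Delta^{(2,1,1)})^2\Delta^{(1^4)}P_{4,n,\alpha}$ are preserved by $\varphi$ and explain why the spurious obstructions $a'_{11}=0$ and $a'_{21}=a'_{11}$ are redundant.
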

\begin{proof}
There is an injection $(\Delta^{(2,1,1)})^2\Delta^{(1^4)}P_{4,n-1,\alpha}^{\ZZ}\rightarrow 
(\Delta^{(2,1,1)})^2\Delta^{(1^4)}P_{4,n,\alpha}^{\ZZ}$ given by 
 $$\begin{pmatrix}
 a_{11} & a_{12} & a_{13} & a_{14}\\
 a_{21} & a_{22} & a_{23}\\
 a_{31} & a_{32}\\
 a_{41}  
\end{pmatrix}\mapsto
\begin{pmatrix}
 a_{11}+1 & a_{12}+1 & a_{13} & a_{14}\\
 a_{21} & a_{22}+1 & a_{23}+1\\
 a_{31} & a_{32}\\
 a_{41}   
\end{pmatrix}.$$
The matrices that are not hit are those with $a_{12}=0$ or $a_{22}=0$ or $a_{23}=0$ or $a_{21}+a_{22}=a_{11}+a_{12}$ or $a_{31}+a_{32}=a_{21}+a_{22}$.
\end{proof}
Let $\Delta^{(2,2)}(\Delta^{(2,1,1)})^2\Delta^{(1^4)}P_{4,n,\alpha}:=\{A\in (\Delta^{(2,1,1)})^2\Delta^{(1^4)}P_{4,n,\alpha}:~a_{12}=0\lor a_{22}=0\lor a_{23}=0\lor a_{21}+a_{22}=a_{11}+a_{12}\lor a_{31}+a_{32}=a_{21}+a_{22}\}$. The face $\{A\in P_{4,n,\alpha}:~a_{41}=a_{14}=a_{13}=a_{23}=0\}\subseteq \Delta^{(2,2)}(\Delta^{(2,1,1)})^2\Delta^{(1^4)}P_{4,n,\alpha}$ has dimension $2$ because it contains the $3$ affine independent points
$$ 
 \begin{pmatrix}
 n+\alpha_1 & n+\alpha_2 & 0 & 0\\
 0 & n+\alpha_3 & 0\\
 0 & n\\
 0   
\end{pmatrix},
 \begin{pmatrix}
 n+\alpha_1 & n+\alpha_2-1 & 0 & 0\\
 1 & n+\alpha_3 & 0\\
 0 & n\\
 0   
\end{pmatrix},$$  $$
 \begin{pmatrix}
 n+\alpha_1 & n+\alpha_2-1 & 0 & 0\\
 1 & n+\alpha_3-1 & 0\\
 1 & n\\
 0   
\end{pmatrix}. $$
We summarize that 
$$P_{4,n,\alpha}\supseteq \Delta^{(1^4)}P_{4,n,\alpha}\supseteq \Delta^{(2,1,1)}\Delta^{(1^4)}P_{4,n,\alpha}\supseteq (\Delta^{(2,1,1)})^2\Delta^{(1^4)}P_{4,n,\alpha}\supseteq \Delta^{(2,2)}(\Delta^{(2,1,1)})^2\Delta^{(1^4)}P_{4,n,\alpha}$$ 
is a sequence of unions of faces of $P_{4,n,\alpha}$ with corresponding dimensions 
$$ 6>5>4>3>2. $$

\end{document}